\documentclass[11pt]{article}
\usepackage[margin=1in]{geometry}
\usepackage{amsmath,amssymb,mathtools,amsthm}
\usepackage{booktabs}
\usepackage{microtype}
\usepackage{graphicx}
\usepackage{xcolor}
\usepackage{algorithm}
\usepackage{algorithmic}
\usepackage{flafter}
\usepackage{needspace}
\usepackage{tikz}
\usepackage{pgfplots}
\usepackage[hidelinks]{hyperref}
\usepackage[capitalize,nameinlink]{cleveref}
\pgfplotsset{compat=1.18}
\usetikzlibrary{arrows.meta,positioning}
\usepgfplotslibrary{groupplots}
\hypersetup{
  hidelinks,
  pdftitle={Parameter-Robust Subspace Correction with Multiple Semidefinite Penalties},
  pdfauthor={Subhransu S. Bhattacharjee}
}

\newcommand{\R}{\mathbb{R}}
\newcommand{\N}{\mathcal{N}}
\newcommand{\W}{\mathcal{W}}
\newcommand{\norm}[1]{\lVert #1\rVert}
\newcommand{\range}{\operatorname{range}}
\newcommand{\email}[1]{\href{mailto:#1}{#1}}

\newtheorem{theorem}{Theorem}
\newtheorem{lemma}[theorem]{Lemma}
\newtheorem{corollary}[theorem]{Corollary}
\newtheorem{proposition}[theorem]{Proposition}

\theoremstyle{definition}
\newtheorem{example}[theorem]{Example}
\theoremstyle{remark}
\newtheorem{remark}[theorem]{Remark}
\crefname{example}{example}{examples}
\Crefname{example}{Example}{Examples}
\crefname{remark}{remark}{remarks}
\Crefname{remark}{Remark}{Remarks}
\newenvironment{keywords}{\par\medskip\noindent\textbf{Keywords:}\ }{\par}

\newcommand{\MultipenaltySubsetRows}{%
  left & 210 & 210 & 0 \\%
  right & 210 & 210 & 0 \\%
  joint & 67 & 67 & 0 \\%
}
\newcommand{\MultipenaltyExactRows}{%
  $(1,1)$ & 13.56 \\%
  $(10^{8},1)$ & 72.18 \\%
  $(1,10^{8})$ & 72.18 \\%
  $(10^{8},10^{8})$ & 87.45 \\%
  $(10^{8},10^{2})$ & 82.47 \\%
  $(10^{2},10^{8})$ & 82.47 \\%
}
\newcommand{\MultipenaltyPrimaryRows}{%
  1 & 418 & 7 & 9 & 9 & 10 & 10 & 10 \\%
  2 & 1,602 & 7 & 12 & 12 & 12 & 13 & 13 \\%
  3 & 6,274 & 7 & 12 & 12 & 13 & 13 & 13 \\%
  4 & 24,834 & 7 & 13 & 13 & 13 & 13 & 13 \\%
  5 & 98,818 & 7 & 13 & 13 & 13 & 13 & 13 \\%
}

\newcommand{\MultipenaltyControlRows}{%
  component-matched & 12 & 12 & 13 \\%
  native & $300^\ast$ & $300^\ast$ & $300^\ast$ \\%
  uniform maximum & 14 & 14 & 13 \\%
}

\newcommand{\MultipenaltyBaselineLeftDivergence}{5.82}
\newcommand{\MultipenaltyBaselineRightDivergence}{5.91}
\newcommand{\MultipenaltyLeftStiffLeftDivergence}{2.34\times10^{-7}}
\newcommand{\MultipenaltyLeftStiffRightDivergence}{5.93}
\newcommand{\MultipenaltyRightStiffRightDivergence}{2.40\times10^{-7}}
\newcommand{\MultipenaltyRightStiffLeftDivergence}{5.82}

\newcommand{\ControlSubsetRows}{%
  left & 210 & 170 & 40 & $1.4336$ \\%
  right & 210 & 170 & 40 & $1.4336$ \\%
  joint & 67 & 0 & 67 & $2.1228$ \\%
}
\newcommand{\ControlRateRows}{%
  $10^{2}$ & 0.44674 & $5.16\times10^{-2}$ & $670.66$ \\%
  $10^{4}$ & 0.47081 & $5.54\times10^{-4}$ & $6.37\times10^{4}$ \\%
  $10^{6}$ & 0.47107 & $5.55\times10^{-6}$ & $6.37\times10^{6}$ \\%
  $10^{8}$ & 0.47107 & $4.05\times10^{-7}$ & $6.37\times10^{8}$ \\%
}
\newcommand{\ControlFreeDofs}{354}
\newcommand{\ControlPatches}{57}

\newcommand{\ControlJointPredicted}{0.47107}
\newcommand{\ThreeSubsetRows}{%
  $\{1\}$ & 602 & 602 & 0 & $<10^{-14}$ \\%
  $\{2\}$ & 602 & 601 & 1 & $0.6942$ \\%
  $\{3\}$ & 602 & 602 & 0 & $<10^{-14}$ \\%
  $\{1,2\}$ & 386 & 386 & 0 & $<10^{-14}$ \\%
  $\{1,3\}$ & 386 & 386 & 0 & $<10^{-14}$ \\%
  $\{2,3\}$ & 386 & 386 & 0 & $<10^{-14}$ \\%
  $\{1,2,3\}$ & 171 & 171 & 0 & $<10^{-14}$ \\%
}
\newcommand{\ThreeOrthantRows}{%
  $(1, 1, 1)$ & $29.6$ \\%
  $(10^{8}, 1, 1)$ & $163.48$ \\%
  $(1, 10^{8}, 1)$ & $2.08\times10^{8}$ \\%
  $(1, 1, 10^{8})$ & $163.48$ \\%
  $(10^{8}, 10^{8}, 1)$ & $462.44$ \\%
  $(10^{8}, 1, 10^{8})$ & $188.5$ \\%
  $(1, 10^{8}, 10^{8})$ & $462.44$ \\%
  $(10^{8}, 10^{8}, 10^{8})$ & $420.58$ \\%
  $(10^{8}, 10^{2}, 10^{2})$ & $297.97$ \\%
  $(10^{2}, 10^{8}, 10^{2})$ & $8.94\times10^{6}$ \\%
  $(10^{2}, 10^{2}, 10^{8})$ & $297.97$ \\%
}

\newcommand{\ThreeFreeDofs}{818}
\newcommand{\ThreePatches}{49}
\newcommand{\ThreeBaseN}{6}
\newcommand{\ThreeMiddleGamma}{0.6942}
\newcommand{\ThreeMiddlePredicted}{1.44047}
\newcommand{\ThreeMiddleTailError}{2.12\times10^{-6}}
\newcommand{\ThreeLatticeMembers}{16}
\newcommand{\ThreeCommutator}{2.421}

\pgfplotstableread[col sep=comma]{
epsilon,point,vertex,cone
1.0,82.62473941288671,47.40309775668702,7.472340939330371
0.31622776601683794,107.36311206074046,61.59168071636041,10.052746046977079
0.1,282.4297714887628,148.24131714269447,13.227609774613029
0.03162277660168379,836.0862324976029,370.5189546846035,16.021708573415786
0.01,2586.920659338358,788.071211307921,17.457398927458275
0.0031622776601683794,8123.551775601872,1251.4549226832557,18.129672074707152
0.001,25631.91878647178,1541.7844212552561,18.355969754527845
0.00031622776601683794,80998.23732712198,1664.3865218575518,18.428992931349818
0.0001,256081.90978739905,1707.3763779532449,18.45223413319784
3.1622776601683795e-05,809745.0959127952,1721.4427666416325,18.459598671047875
1e-05,2560581.8208532664,1725.9398962633954,18.461929048049758
3.162277660168379e-06,8097213.68170599,1727.366966321596,18.462666128884948
1e-06,25605580.85364731,1727.8187422318463,18.462899229901193
3.162277660168379e-07,80971899.73297812,1727.9616570058554,18.46297294682444
1e-07,256055573.6224454,1728.0068540635502,18.462996272639895
3.162277660168379e-08,809718804.6925306,1728.0211397182659,18.46300366992673
1e-08,2560554919.3286424,1728.025645642293,18.463006015729803
3.1622776601683795e-09,8097189732.281808,1728.0271999796623,18.46300732787091
1e-09,25605520093.129086,1728.0276022933504,18.4630067308221
3.1622776601683795e-10,80971621693.35808,1728.0298218180199,18.463010230016604
1e-10,256052226709.8521,1728.0306382142157,18.463014664682227
}\PenaltySweepData
\pgfplotstableread[col sep=comma]{
n,base_condition,stiff_condition
4,4.262846724215376,9.455410872312049
8,5.968030550604486,16.49051805038173
12,7.472340939330371,18.463006015729803
16,8.270519194639245,19.16592147594529
20,8.732837865681958,19.562093869663467
24,9.039413876451903,19.784574648240056
28,9.273394838087363,19.922904606531347
32,9.469119793925575,20.017472918514414
36,9.636199196008896,20.086531889691717
40,9.777231600544923,20.139322014268117
}\ScalabilityData
\pgfplotstableread[col sep=comma]{
n,bound
4,9.464509717124695
6,15.106347770563056
8,16.687906451831847
10,18.155187171086787
12,18.6966714836777
14,19.17027978127068
16,19.459203336813527
}\OneParameterBoundData

\title{Parameter-Robust Subspace Correction \\
with Multiple Semidefinite Penalties}
\author{Subhransu S. Bhattacharjee\thanks{%
\mbox{The Australian National University, Canberra, ACT 2601, Australia}\\
\mbox{Corresponding author: \email{Subhransu.Bhattacharjee@anu.edu.au}.}}}
\date{}

\begin{document}
\maketitle

\begin{abstract}
Independently weighted semidefinite penalties arise in augmented-Lagrangian and
constrained formulations.  This paper characterizes when an exact additive
subspace-correction preconditioner remains uniformly effective over all
nonnegative penalty weights on a fixed finite-dimensional space.  Robustness
holds precisely when the correction spaces decompose every joint kernel
generated by a nonempty subset of penalties.  If one condition fails, a
computable constant determines the exact first-order decay of the smallest
preconditioned eigenvalue along the associated parameter ray, and the condition
number grows linearly; none of the subset conditions can be discarded in
general.  Filtered decompositions provide computable sufficient bounds on
parameter-ordering cones, while distributive kernel lattices permit a single
common splitting.  Exact-additive computations confirm the characterization
and predicted rates.  Separate Scott--Vogelius experiments produce stable
multilevel iteration counts over the tested weights and mesh levels.  The
analysis does not establish mesh-uniformity.
\end{abstract}

\begin{keywords}
parameter-robust preconditioning, subspace correction, augmented Lagrangian,
multigrid, Scott--Vogelius finite elements
\end{keywords}

\section{Introduction}\label{sec:introduction}

Augmented-Lagrangian preconditioners for incompressible flow replace a difficult
pressure Schur-complement approximation with one based on a scaled pressure
mass matrix.  This modification introduces a large semidefinite grad-div term
in the velocity block~\cite{benzi2006augmented,farrell2019augmented}.  As its
weight increases, the velocity energy separates sharply between the
divergence-free subspace and its complement, and unmodified multigrid
components deteriorate.  Robust schemes therefore use relaxation and
intergrid transfer that preserve the penalty kernel~\cite{schoberl1999parameter,
farrell2021reynolds,farrell2022elasticity,shih2023augmented}.

That principle concerns one penalty and one parameter.  Independent weights
appear whenever a constraint is enforced with different strength in different
subdomains, several constraints are augmented at once, or a multiphysics block
inherits parameters from unrelated material
data~\cite{laakmann2022mhd,hong2019conservative,cai2026thermoporoelasticity}.
The distinction is substantive because the weights may diverge separately.  If
$\tau_1$ alone grows, the penalty vanishes on $\ker K_1$; if $\tau_2$ alone
grows, it vanishes on $\ker K_2$; simultaneous growth leaves their intersection
unpenalized.  Different parameter paths therefore expose different limiting
subspaces, and correction spaces suitable for one path need not suffice for
another.

\begin{figure}[tbhp]
\centering
\begin{tikzpicture}[>=Latex,font=\footnotesize,
  ax/.style={->,line width=.7pt},
  esc/.style={-{Latex[length=2.2mm]},line width=1.1pt}]

\begin{scope}
  \draw[ax] (0,0) -- (3.70,0) node[below left=1pt and -1pt]{$\tau_1$};
  \draw[ax] (0,0) -- (0,3.70) node[below left=-1pt and 1pt]{$\tau_2$};
  \draw[black!45,line width=.5pt] (-.09,.62) -- (.09,.62);
  \draw[black!45,line width=.5pt] (.62,-.09) -- (.62,.09);
  \node[anchor=east,black!55,font=\scriptsize] at (-.12,.62) {$\bar\tau_2$};
  \node[anchor=north,black!55,font=\scriptsize] at (.62,-.12) {$\bar\tau_1$};
  \draw[esc] (.55,.62) -- (3.05,.62);
  \draw[esc] (.62,.55) -- (.62,3.05);
  \draw[esc] (.70,.70) -- (2.85,2.85);
  \node[anchor=west]  at (3.12,.62)  {$\N_{\{1\}}$};
  \node[anchor=south] at (.62,3.12)  {$\N_{\{2\}}$};
  \node[anchor=south west] at (2.75,2.80) {$\N_{\{1,2\}}$};
  \node at (1.8,-.62) {(a) unpenalized space in each limit};
\end{scope}

\begin{scope}[xshift=5.6cm]
  \fill[black!9] (0,0) -- (3.4,0) -- (3.4,3.4) -- cycle;
  \fill[black!20] (0,0) -- (3.4,3.4) -- (0,3.4) -- cycle;
  \draw[ax] (0,0) -- (4.35,0) node[below left=1pt and -1pt]{$\tau_1$};
  \draw[ax] (0,0) -- (0,4.35) node[below left=-1pt and 1pt]{$\tau_2$};
  \draw[dashed,line width=.7pt] (0,0) -- (3.55,3.55);
  \node at (2.35,.98) {$\tau_1\ge\tau_2$};
  \node at (2.35,.53)  {$R_{(1,2)}$};
  \node at (.98,2.35) {$\tau_2\ge\tau_1$};
  \node at (1.38,2.80) {$R_{(2,1)}$};
  \node at (1.8,-.62) {(b) one decomposition per ordering cone};
\end{scope}
\end{tikzpicture}
\caption{Two penalties.  (a) Which subspace survives depends on which weights
diverge.  By \cref{thm:characterization} robustness on the quadrant is
equivalent to decomposing all three; the intersection alone does not suffice
(\cref{ex:counterexample}), nor do the singletons (\cref{thm:irredundant}).
(b) The
sufficiency proof splits the quadrant along $\tau_1=\tau_2$ and builds a
separate $R_\pi$ on each cone, where the weights are ordered and the operator
telescopes into nested semidefinite faces.}
\label{fig:orthant}
\end{figure}

We determine the joint kernels that a fixed additive correction must
decompose.  For a nonempty subset $J$ of the penalty indices, let
$\N_J=\bigcap_{j\in J}\ker K_j$ and let $\N_J^{\mathrm{loc}}$ denote the span
assembled from the intersections of the correction spaces with $\N_J$.

\paragraph{Contributions} The main results are as follows.
\begin{enumerate}
 \item On a fixed discretization, the exact additive preconditioner
 is uniformly conditioned over the whole parameter orthant if and only if
 $\N_J^{\mathrm{loc}}=\N_J$ for every nonempty $J\subseteq[m]$
 (\cref{thm:characterization}).  Neither the full intersection alone nor the
 collection of singleton kernels is sufficient.
 \item If the equality fails for $J$, the generalized eigenvalue $\gamma_J$
 determines the exact first-order decay of the smallest preconditioned
 eigenvalue along the associated ray.  The corresponding condition number has
 linear-order growth, and the dichotomy excludes intermediate powers of the
 ray parameter (\cref{thm:necessary,thm:rate}).
 \item The conditions are irredundant in the worst case: for each nonempty
 $J\subseteq[m]$, there is a problem for which precisely that condition fails
 (\cref{thm:irredundant}).
 \item Sufficiency is proved one ordering cone at a time, which makes the
 bound computable from generalized eigenvalues (\cref{cor:cone}).  One
 decomposition serves every cone when the joint nullspaces generate a
 distributive lattice (\cref{thm:distributive}).  This property is automatic
 for two penalties but can fail for three.
\end{enumerate}

The last result distinguishes the characterization from the classical
componentwise sufficient condition, which assumes one decomposition stable in
every penalty energy.  In \cref{ex:ordering-dependent}, three penalties in
$\R^2$ have distinct kernel lines and admit no such common decomposition, yet
the preconditioned condition number is at most $2$ throughout the orthant.  The
cone construction in \cref{fig:orthant}(b) remains applicable because, within
each ordering cone, the operator telescopes into nested semidefinite faces
respected by one filtered decomposition.

\Cref{sec:study} evaluates the theory in three settings.  Dense staggered-grid
calculations permit direct computation of the spectral constants.  Exact
additive correction on Scott--Vogelius spaces evaluates the kernel criterion on a
finite-element discretization, including deliberately deficient correction
spaces.  Finally, an inexact W-cycle provides empirical evidence outside the
hypotheses of the exact-additive theory.  These evidence modes are reported
separately.

\section{Related work}\label{sec:related}

The stable-decomposition and overlap estimates used in additive subspace
correction are classical~\cite{xu1992subspace,griebel1995abstract,
toselli2005domain}.  The exact identity in \cref{prop:identity} is their
finite-dimensional spectral form.  Hackbusch considered additive Schwarz
methods for cones generated by positive-semidefinite component matrices and
showed that estimates valid component by component give a uniform bound for
every nonnegative combination~\cite{hackbusch1992frequency}.  The common
stable decomposition in \cref{thm:uniform} is therefore a classical sufficient
condition.  We restate it to fix the constants used below and to compare it
with the weaker characterization in \cref{sec:necessary}.

For one dominant semidefinite form, Sch\"oberl related robust multigrid to a
local decomposition of the limiting kernel~\cite{schoberl1999parameter}.
Subsequent work treated singular and nearly singular systems, sharp
subspace-correction estimates, and semicoercive
optimization~\cite{lee2007nearly,lee2008sharp,wu2014parallel,
lee2025semicoercive}, while exact-sequence and auxiliary-space arguments gave
robust solvers for weighted $H(\mathrm{div})$ and $H(\mathrm{curl})$ problems
and for parameter-dependent hybridizable discontinuous Galerkin
systems~\cite{arnold2000multigrid,xu1996auxiliary,fu2021uniform}.  Brenner's
analysis of parameter-dependent mechanics and the metric-perturbed framework of
Budi\v{s}a et al.\ likewise use a single relative singular
parameter~\cite{brenner1996multigrid,budisa2024metric}.  These analyses
motivate the kernel condition but treat a single relative singular parameter
rather than independently diverging semidefinite terms.

The flow application belongs to the augmented-Lagrangian literature initiated
for constrained boundary-value problems~\cite{fortin1983augmented}.  Grad-div
augmentation and the associated block factorization were analyzed for Stokes
and Oseen systems in~\cite{olshanskii2004graddiv,benzi2006augmented}; see
\cite{benzi2005saddle} for a review.  Later work produced robust velocity
solvers for high-Reynolds-number flow, Scott--Vogelius elements, strong
viscosity variation, nearly incompressible elasticity, and the Oseen--Frank
model~\cite{farrell2019augmented,farrell2021reynolds,shih2023augmented,
farrell2022elasticity,xia2021oseenfrank}, and multiparameter block
preconditioners arise in magnetohydrodynamics, poroelasticity,
thermo-poroelasticity, and hybridized
discretizations~\cite{laakmann2022mhd,hong2019conservative,piersanti2021congruence,
cai2026thermoporoelasticity,henriquez2025parameter}.  These exploit
problem-specific block norms or factorizations rather than one fixed additive
correction for a cone of semidefinite perturbations.

High-contrast Schwarz methods often obtain robustness by enriching a coarse
space with local generalized eigenvectors~\cite{efendiev2012robust,
spillane2014geneo,aldaas2025robust}.  Generalized eigenvalues play a different
role here: rather than selecting modes with which to enrich a coarse space,
they evaluate a prescribed decomposition and quantify the extent of its failure
on a joint kernel.

Within the scope reviewed above, a necessary-and-sufficient criterion is not
available for one fixed additive correction over a full parameter orthant when
no componentwise-stable reconstruction exists.  \Cref{sec:necessary}
provides such a fixed-dimensional criterion, quantifies failure through the
smallest preconditioned eigenvalue, and proves the irredundance of its subset
conditions.  Finite distributive lattices of subspaces and their compatible
bases are classical~\cite{gratzer2011lattice,harrison2014reflexivity,
loday2012algebraic}; we use this structure to identify when the cone-dependent
decompositions can be replaced by one common decomposition.

\section{The operator family}\label{sec:notation}

This section introduces the operator family and correction spaces and relates
the abstract formulation to augmented-Lagrangian preconditioning.

All spaces and matrices are real and finite dimensional.  For symmetric $H$,
$H\succ0$ and $H\succeq0$ mean positive definite and positive semidefinite,
$H\preceq G$ denotes the L\"owner order, and $\ker H$ and $\range H$ are the
nullspace and range.  For $H\succeq0$, define the energy seminorm
$\norm{v}_H^2=v^\top Hv$, which is a norm when $H\succ0$; $I_X$ is the identity
on $X$.  If $A$ and $P$ are symmetric positive definite (SPD), then
\begin{equation}
 \kappa(P^{-1}A)
 =\frac{\lambda_{\max}(A^{1/2}P^{-1}A^{1/2})}
        {\lambda_{\min}(A^{1/2}P^{-1}A^{1/2})}.
\end{equation}

Throughout, $m$ and $s$ are positive integers, $[m]=\{1,\ldots,m\}$, and an
intersection over the empty index set means the full ambient space.

\paragraph{The operator family}
For each positive discretization parameter $h$, let $V_h=\R^{n_h}$, let
$A_{0,h}\succ0$, and let $K_{j,h}:V_h\to\R^{r_{j,h}}$ for $j\in[m]$.  Define
\begin{equation}
 \begin{aligned}
 A_{j,h}&=K_{j,h}^\top K_{j,h},
    &&j\in[m],\\
 A_h(\boldsymbol\tau)&=A_{0,h}+\sum_{j=1}^m\tau_jA_{j,h},
    &&\boldsymbol\tau\in[0,\infty)^m.
 \end{aligned}
 \label{eq:family}
\end{equation}
Thus $A_h(\boldsymbol\tau)\succ0$ for every admissible parameter vector, and
we omit $h$ for one fixed discretization.

\paragraph{Subspaces and assembly}
Let $V=\sum_{i=1}^s I_iV_i$ with $I_i:V_i\to V$ injective, set
$\W=\bigoplus_{i=1}^sV_i$, and let the assembly map $\mathcal I:\W\to V$ be
$\mathcal I(v_1,\ldots,v_s)=\sum_iI_iv_i$.  For $\ell=0,\ldots,m$, define
$A_{\ell,i}=I_i^\top A_\ell I_i$,
$\mathcal D_\ell=\operatorname{diag}_{i=1}^s(A_{\ell,i})$, and
$A_i(\boldsymbol\tau)=I_i^\top A(\boldsymbol\tau)I_i$.
For $v\in V$, let $\mathfrak D(v):=\{w\in\W:\mathcal Iw=v\}$ be the affine
space of its subspace decompositions.  The exact additive
subspace-correction preconditioner is
\begin{equation}
 P(\boldsymbol\tau)^{-1}
 =\sum_{i=1}^sI_iA_i(\boldsymbol\tau)^{-1}I_i^\top.
 \label{eq:schwarz}
\end{equation}
A decomposition operator is a linear map
$R=(R_1,\ldots,R_s)^\top:V\to\W$ such that $\mathcal IR=I_V$.

\paragraph{Joint kernels}
For each nonempty $J\subseteq[m]$, stack the matrices $K_j$, $j\in J$, to
form $K_J$.  The associated operator, local operator, joint kernel, and
assembled local span are
\begin{equation}
 \begin{aligned}
 A_J&=\sum_{j\in J}A_j,&
 \mathcal D_J&=\sum_{j\in J}\mathcal D_j,\\
 \N_J&=\ker K_J=\ker A_J
      =\bigcap_{j\in J}\ker K_j,&
 \N_J^{\mathrm{loc}}&=\sum_{i=1}^s(I_iV_i\cap\N_J).
 \end{aligned}
 \label{eq:faces}
\end{equation}
For the empty set, set $A_\varnothing=0$, $\mathcal D_\varnothing=0$,
$\N_\varnothing=V$, and $\N_\varnothing^{\mathrm{loc}}=V$.
The constants $C_{\rm sd}$ and $C_{\rm ov}$ are the stable-decomposition and
overlap constants in \cref{prop:identity}.  The energy stability constants
$C_\ell$ and $q_\ell$ are defined in
\crefrange{eq:componentstable}{eq:overlap}.

Inequalities between semidefinite forms are used repeatedly, so we record the
standard criterion.

\begin{lemma}[Semidefinite domination]\label{lem:domination}
Let $H\succeq0$ and $G\succeq0$.  A finite $c$ with $H\preceq c\,G$ exists if
and only if $\ker G\subseteq\ker H$.  When it exists, the least such $c$ is
the largest eigenvalue of the pencil $(Y^\top HY,\,Y^\top GY)$, where the
columns of $Y$ form a basis of $(\ker G)^\perp$; if $G=0$, the least $c$ is
zero~\cite{golub2013matrix}.
\end{lemma}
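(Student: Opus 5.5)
The plan is to prove both directions by working on the orthogonal complement of $\ker G$. Let $Y$ be a matrix whose columns form a basis of $(\ker G)^\perp=\range G$. Every $v\in V$ splits uniquely as $v=Yx+z$ with $z\in\ker G$.

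\emph{Necessity.} Suppose $H\preceq cG$ for some finite $c$, and take $z\in\ker G$. Then $0\le z^\top Hz\le c\,z^\top Gz=0$, so $\norm{z}_H=0$. Since $H\succeq0$, this forces $Hz=0$: write $H=L^\top L$, so that $Lz=0$. Hence $\ker G\subseteq\ker H$.

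\emph{Sufficiency and the sharp constant.} Assume $\ker G\subseteq\ker H$. For $v=Yx+z$ as above, $Gz=0$ and $Hz=0$ kill the cross terms, giving
\[
v^\top Hv=x^\top Y^\top HYx,\qquad v^\top Gv=x^\top Y^\top GYx .
\]
So the inequality $H\preceq cG$ on $V$ is equivalent to $Y^\top HY\preceq c\,Y^\top GY$ on the coordinate space of $x$.

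The matrix $Y^\top GY$ is positive definite. Indeed, if $x^\top Y^\top GYx=0$, then $GYx=0$, so $Yx\in\ker G\cap(\ker G)^\perp=\{0\}$, and $x=0$ because the columns of $Y$ are independent.

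With $Y^\top GY\succ0$, the generalized Rayleigh quotient $x^\top Y^\top HYx/x^\top Y^\top GYx$ has supremum equal to $\lambda_{\max}(Y^\top HY,Y^\top GY)$, which is finite. This gives existence of $c$, and also that this $\lambda_{\max}$ is the least admissible $c$: it is attained at the top generalized eigenvector.

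The answer does not depend on the choice of basis $Y$. Replacing $Y$ by $YT$ with $T$ invertible is a congruence of both matrices, which leaves the pencil's eigenvalues unchanged.

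\emph{The case $G=0$.} Then $\ker G=V$, the hypothesis forces $H=0$, $Y$ is empty, and the least $c$ is $0$ by convention.

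There is no real obstacle. The only step needing care is showing that $Hz=0$, and not merely $z^\top Hz=0$, for $z\in\ker G$; this is what removes the cross terms, and it follows from the factorization $H=L^\top L$.
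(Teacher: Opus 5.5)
Your proposal is correct and follows essentially the same route as the paper: necessity from $z^\top Hz=0\Rightarrow Hz=0$ for $z\in\ker G$, and sufficiency by restricting both forms to $(\ker G)^\perp$, where $Y^\top GY\succ0$, and taking the maximal generalized Rayleigh quotient. You make explicit two points the paper leaves implicit: that the cross terms in $v=Yx+z$ vanish, and that the pencil's spectrum does not depend on the choice of basis (the paper simply takes $Y$ with orthonormal columns).
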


\begin{proof}
If $H\preceq c\,G$, then $x\in\ker G$ implies $x^\top Hx=0$ and hence
$Hx=0$.  Conversely, suppose $\ker G\subseteq\ker H$ and choose $Y$ with
orthonormal columns spanning $(\ker G)^\perp$.  Both forms vanish on
$\ker G$, while $Y^\top GY$ is positive definite.  The smallest admissible
$c$ is therefore
\begin{equation}
 \max_{y\ne0}\frac{y^\top Y^\top H Yy}{y^\top Y^\top GYy},
\end{equation}
which is the stated generalized eigenvalue.  If $G=0$, the kernel inclusion
forces $H=0$, and the least constant is zero.
\end{proof}

\subsection{Augmented-Lagrangian origin}\label{sec:augmentation}

The family in \cref{eq:family} arises directly from augmented-Lagrangian
preconditioning.  Consider a symmetric saddle-point matrix
\begin{equation}
 \mathcal A=
 \begin{bmatrix}F&B^\top\\ B&0\end{bmatrix},
 \qquad F\succ0,
 \label{eq:saddle}
\end{equation}
where $B$ has full row rank.  Let $M_p$ be an SPD pressure weight.  The standard
augmented-Lagrangian construction~\cite{benzi2006augmented} adds the constraint
with a parameter $\gamma>0$ and replaces $F$ by
\begin{equation}
 F_\gamma=F+\gamma B^\top M_p^{-1}B.
 \label{eq:augmentedblock}
\end{equation}
The augmentation leaves the solution of \cref{eq:saddle} unchanged for
homogeneous constraints.  With $S=BF^{-1}B^\top$ and
$S_\gamma=BF_\gamma^{-1}B^\top$, the Woodbury identity gives
\begin{equation}
 S_\gamma^{-1}=S^{-1}+\gamma M_p^{-1},
 \label{eq:schuridentity}
\end{equation}
so $\gamma^{-1}S_\gamma^{-1}\to M_p^{-1}$ as $\gamma\to\infty$.  At the same
time the augmentation increases the energy of velocity components outside
$\ker B$ while leaving the energy on $\ker B$ unchanged: this is the nearly
singular velocity problem addressed by kernel-preserving relaxation.

For constant viscosity, writing $F=\nu F_0$ and dividing
\cref{eq:augmentedblock} by $\nu$ leaves
$F_0+(\gamma/\nu)B^\top M_p^{-1}B$, so the effective penalty is proportional
to $\nu^{-1}$.  If several constraints $B_j$ are augmented at once, each with
its own SPD weight $M_{p,j}$ and its own parameter, the velocity block has the
form \cref{eq:family}, with $K_j=M_{p,j}^{-1/2}B_j$.  In
\cref{sec:regional-penalties}, restricting the discrete divergence form to two
mesh-aligned subdomains produces two independently weighted semidefinite
penalties.
\section{The classical sufficient condition}\label{sec:upper}

The classical route to parameter robustness asks for a single reconstruction
that is stable in the energy of $A_0$ and of each penalty separately.  That
hypothesis is sufficient, and \cref{sec:necessary} shows it to be
strictly stronger than robustness requires.  We first record the exact identity
underlying every such estimate, in which the spectral endpoints of the
preconditioned operator are the optimal stable-decomposition and overlap
constants~\cite{xu1992subspace,toselli2005domain}.

\begin{proposition}[Exact subspace-correction identity]\label{prop:identity}
Let $A$ be symmetric positive definite, and suppose that the injective maps
$I_i:V_i\to V$ satisfy $V=\sum_iI_iV_i$.  Let
$A_i=I_i^\top A I_i$, $\mathcal D=\operatorname{diag}_i(A_i)$, and
$P^{-1}=\sum_i I_iA_i^{-1}I_i^\top$.  Define
\begin{equation}
 \begin{aligned}
 C_{\rm sd}(A)
   &=\sup_{v\ne0}\frac{\displaystyle\min_{w\in\mathfrak D(v)}
      \norm{w}_{\mathcal D}^2}{\norm v_A^2},\\
 C_{\rm ov}(A)
   &=\sup_{w\ne0}\frac{\norm{\mathcal Iw}_A^2}
      {\norm{w}_{\mathcal D}^2}.
 \end{aligned}
 \label{eq:schwarz-constants}
\end{equation}
Each $A_i$ is SPD, as is $P^{-1}$, and the minimum in
\cref{eq:schwarz-constants} is attained.
The spectral endpoints satisfy
\begin{equation}
 \begin{aligned}
  \lambda_{\min}(P^{-1}A)&=C_{\rm sd}(A)^{-1},&
  \lambda_{\max}(P^{-1}A)&=C_{\rm ov}(A),\\
  \kappa(P^{-1}A)&=C_{\rm sd}(A)C_{\rm ov}(A).&&
 \end{aligned}
 \label{eq:schwarzidentity}
\end{equation}
Moreover, $1\le C_{\rm ov}(A)\le s$.
\end{proposition}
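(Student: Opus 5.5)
The plan is to factor the preconditioner through the assembly map and reduce both spectral endpoints to statements about $\mathcal I$ and $\mathcal D$. First I record the elementary facts. Each $A_i=I_i^\top AI_i$ is SPD because $I_i$ is injective and $A\succ0$: $x^\top A_ix=\norm{I_ix}_A^2>0$ for $x\ne0$. Hence $\mathcal D$ is SPD on $\W$. Moreover $\mathcal I^\top:V\to\W$ is $u\mapsto(I_1^\top u,\dots,I_s^\top u)$, so
\begin{equation}
 P^{-1}=\mathcal I\mathcal D^{-1}\mathcal I^\top .
\end{equation}
Since $V=\sum_iI_iV_i$, the map $\mathcal I$ is surjective and $\mathcal I^\top$ is injective. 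Therefore $u^\top P^{-1}u=\norm{\mathcal D^{-1}\mathcal I^\top u}_{\mathcal D}^2>0$ for $u\ne0$, and $P^{-1}$ is SPD. The set $\mathfrak D(v)$ is a nonempty affine subspace by surjectivity, and $\norm{\cdot}_{\mathcal D}^2$ is a strictly convex coercive quadratic on it. The minimum is therefore attained, uniquely.

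For the smallest eigenvalue I will prove the variational identity
\begin{equation}
 \min_{w\in\mathfrak D(v)}\norm{w}_{\mathcal D}^2=v^\top P v .
\end{equation}
The Lagrange condition for minimizing $w^\top\mathcal Dw$ subject to $\mathcal Iw=v$ gives $\mathcal Dw=\mathcal I^\top\mu$. Substituting into the constraint yields $\mathcal I\mathcal D^{-1}\mathcal I^\top\mu=v$, so $\mu=Pv$ and $w^*=\mathcal D^{-1}\mathcal I^\top Pv$. Then $\norm{w^*}_{\mathcal D}^2=v^\top P\mathcal I\mathcal D^{-1}\mathcal I^\top Pv=v^\top Pv$. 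Alternatively, Cauchy--Schwarz in the $\mathcal D$ inner product gives the lower bound directly, $v^\top Pv=(\mathcal I^\top Pv)^\top w\le\norm{\mathcal D^{-1}\mathcal I^\top Pv}_{\mathcal D}\norm w_{\mathcal D}$, with equality at $w^*$. It follows that $C_{\rm sd}(A)=\sup_v v^\top Pv/v^\top Av=\lambda_{\max}(A^{-1}P)=1/\lambda_{\min}(P^{-1}A)$, using that $P^{-1}A$ and $A^{1/2}P^{-1}A^{1/2}$ are similar.

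For the largest eigenvalue I use the singular-value symmetry of $T=A^{1/2}\mathcal I\mathcal D^{-1/2}$. On one side, $TT^\top=A^{1/2}P^{-1}A^{1/2}$. On the other side, $\norm{T}^2=\sup_{z}\norm{\mathcal I\mathcal D^{-1/2}z}_A^2/\norm z^2$, which is $C_{\rm ov}(A)$ after substituting $w=\mathcal D^{-1/2}z$. Hence $\lambda_{\max}(P^{-1}A)=\norm{TT^\top}=\norm{T^\top T}=C_{\rm ov}(A)$, and the condition-number formula follows by taking the ratio of the two endpoints.

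It remains to bound $C_{\rm ov}$. For the lower bound, take $w$ supported in a single component $i$; then $\norm{\mathcal Iw}_A^2=\norm{w_i}_{A_i}^2=\norm w_{\mathcal D}^2$, so $C_{\rm ov}\ge1$. For the upper bound, the triangle inequality in $\norm\cdot_A$ followed by Cauchy--Schwarz gives $\norm{\sum_iI_iw_i}_A^2\le(\sum_i\norm{w_i}_{A_i})^2\le s\sum_i\norm{w_i}_{A_i}^2$. No step is a serious obstacle. The only point needing care is the minimizer identity, specifically checking that $P$ is well defined, that is, that $\mathcal I\mathcal D^{-1}\mathcal I^\top$ is invertible. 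This reduces to the surjectivity of $\mathcal I$, which holds because $V=\sum_iI_iV_i$.
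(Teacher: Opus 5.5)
Your proof is correct and follows the paper's argument. The same operator $T=A^{1/2}\mathcal I\mathcal D^{-1/2}$ gives $\lambda_{\max}(P^{-1}A)=\norm{T}_2^2=C_{\rm ov}(A)$, and you obtain $1\le C_{\rm ov}(A)\le s$ exactly as the paper does. For $\lambda_{\min}$, your identity $\min_{w\in\mathfrak D(v)}\norm w_{\mathcal D}^2=v^\top Pv$ is the paper's minimum-norm formula $\norm{T^\dagger A^{1/2}v}_2^2$ written in the original variables, because $T^\dagger=T^\top(TT^\top)^{-1}$ and $TT^\top=A^{1/2}P^{-1}A^{1/2}$. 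You also verify the SPD and attainment claims that the paper's proof leaves implicit.
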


\begin{proof}
Set $T=A^{1/2}\mathcal I\mathcal D^{-1/2}$, so that
$A^{1/2}P^{-1}A^{1/2}=TT^\top$ is similar to $P^{-1}A$.  For $w=(v_i)_i$ and
$z=\mathcal D^{1/2}w$ we have
$\norm{\mathcal Iw}_A^2/\norm{w}_{\mathcal D}^2=\norm{Tz}_2^2/\norm z_2^2$,
so $C_{\rm ov}(A)=\sigma_{\max}(T)^2$.  Because $\mathcal I$ is onto, $T$ has
full row rank; with $T^\dagger$ its Moore--Penrose inverse,
\begin{equation}
 \min_{w\in\mathfrak D(v)}\norm{w}_{\mathcal D}^2
 =\min_{Tz=A^{1/2}v}\norm z_2^2
 =\norm{T^\dagger A^{1/2}v}_2^2 .
\end{equation}
As $A^{1/2}v$ ranges over $V$, this gives
$C_{\rm sd}(A)=\norm{T^\dagger}_2^2=\sigma_{\min}(T)^{-2}$, proving
\cref{eq:schwarzidentity}.  Taking one nonzero component bounds
$C_{\rm ov}$ below, and
$\norm{\sum_iI_iv_i}_A^2\le s\sum_i\norm{I_iv_i}_A^2$ bounds it above.
\end{proof}

\subsection{A common stable decomposition}

Fix a decomposition operator $R=(R_1,\ldots,R_s)^\top:V\to\W$ and assume it is
stable in every component energy, independently of $\boldsymbol\tau$: for
$\ell=0,\ldots,m$,
\begin{equation}
 \sum_i (R_iv)^\top A_{\ell,i}(R_iv)\le C_\ell v^\top A_\ell v
 \quad\text{for every }v\in V.
 \label{eq:componentstable}
\end{equation}
For semidefinite $A_\ell$, \cref{eq:componentstable} implies
$I_iR_iv\in\ker A_\ell$ for every $v\in\ker A_\ell$ and every $i$.  Let
$q_\ell$ be the least constant such that, for every $(v_i)_i\in\W$,
\begin{equation}
 \Big(\sum_iI_iv_i\Big)^\top A_\ell\Big(\sum_iI_iv_i\Big)
 \le q_\ell\sum_iv_i^\top A_{\ell,i}v_i.
 \label{eq:overlap}
\end{equation}
Such a constant always exists and satisfies $q_\ell\le s$: apply the
Cauchy--Schwarz inequality to the vectors $A_\ell^{1/2}I_iv_i$.

\begin{theorem}[Uniform parameter bound]\label{thm:uniform}
Assume \cref{eq:componentstable}.  Set
$C=\max_{0\le\ell\le m}C_\ell$ and $q=\max_{0\le\ell\le m}q_\ell$.  Then
\begin{equation}
 \kappa\,\left(P(\boldsymbol\tau)^{-1}A(\boldsymbol\tau)\right)\le Cq
 \qquad\text{for every }\boldsymbol\tau\in[0,\infty)^m.
 \label{eq:uniformbound}
\end{equation}
For a mesh family, if
$\sup_h\max_\ell C_\ell(h)<\infty$ and
$\sup_h\max_\ell q_\ell(h)<\infty$, then the condition numbers are uniform in
both $h$ and $\boldsymbol\tau$.
\end{theorem}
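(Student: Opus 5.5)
The plan is to apply \cref{prop:identity} to $A=A(\boldsymbol\tau)$ for fixed $\boldsymbol\tau\in[0,\infty)^m$. That reduces the claim to two bounds, $C_{\rm sd}(A(\boldsymbol\tau))\le C$ and $C_{\rm ov}(A(\boldsymbol\tau))\le q$, both holding uniformly in $\boldsymbol\tau$. The product then bounds $\kappa$ by \cref{eq:schwarzidentity}. The argument rests on two facts: every quantity involved is linear in the parameters, and the weights $\tau_\ell$ (with $\tau_0:=1$) are nonnegative.

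For the lower spectral bound, I will use the fixed decomposition $w=Rv\in\mathfrak D(v)$, which is admissible because $\mathcal IR=I_V$, as a competitor in the minimum defining $C_{\rm sd}$. Since $A_i(\boldsymbol\tau)=\sum_{\ell=0}^m\tau_\ell A_{\ell,i}$, I will expand the local energy and apply \cref{eq:componentstable} term by term:
\begin{equation*}
 \norm{Rv}_{\mathcal D(\boldsymbol\tau)}^2
 =\sum_{\ell=0}^m\tau_\ell\sum_i (R_iv)^\top A_{\ell,i}R_iv
 \le\sum_{\ell=0}^m\tau_\ell C_\ell\, v^\top A_\ell v
 \le C\, v^\top A(\boldsymbol\tau)v .
\end{equation*}
This gives $C_{\rm sd}(A(\boldsymbol\tau))\le C$. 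Nonnegativity of each $\tau_\ell$ is exactly what lets the inequalities be summed.

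For the upper spectral bound, I will take an arbitrary $w=(v_i)_i\in\W$ and sum \cref{eq:overlap} with the weights $\tau_\ell$:
\begin{equation*}
 \norm{\mathcal Iw}_{A(\boldsymbol\tau)}^2=\sum_\ell\tau_\ell\norm{\mathcal Iw}_{A_\ell}^2\le\sum_\ell\tau_\ell q_\ell\sum_i v_i^\top A_{\ell,i}v_i\le q\norm{w}_{\mathcal D(\boldsymbol\tau)}^2 .
\end{equation*}
This gives $C_{\rm ov}\le q$. Note that $\mathcal D(\boldsymbol\tau)$ is SPD because $A_0\succ0$, so \cref{prop:identity} applies for every $\boldsymbol\tau$.

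The mesh-family statement then follows immediately, since the bound $Cq$ involves only the constants $C_\ell(h)$ and $q_\ell(h)$. I do not expect a genuine obstacle here. The only point needing care is that the constants in \cref{eq:componentstable} and \cref{eq:overlap} are $\boldsymbol\tau$-independent and that the $\tau_\ell$ are nonnegative, so that the weighted sums preserve the inequalities. Semidefiniteness of $A_\ell$ for $\ell\ge1$ causes no difficulty, because no division by $v^\top A_\ell v$ occurs.
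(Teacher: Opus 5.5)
Your proposal is correct and follows essentially the same route as the paper. Like the paper, you use $Rv$ as a competitor to get $C_{\rm sd}\le C$, sum the overlap inequality with the nonnegative weights to get $C_{\rm ov}\le q$, and combine the two through \cref{prop:identity}.
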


\begin{proof}
Set $\theta_0=1$ and $\theta_j=\tau_j$.  For the decomposition $v_i=R_iv$,
linearity of the energy gives
\begin{equation}
 \sum_i\norm{v_i}_{A_i(\boldsymbol\tau)}^2
 =\sum_{\ell=0}^m\theta_\ell\sum_i\norm{R_iv}_{A_{\ell,i}}^2
 \le C\sum_{\ell=0}^m\theta_\ell\norm v_{A_\ell}^2
 =C\norm v_{A(\boldsymbol\tau)}^2 ,
\end{equation}
so the optimal stable-decomposition constant is at most $C$, and the same
summation applied to \cref{eq:overlap} bounds the overlap constant by $q$.
\Cref{prop:identity} identifies the product of the two optimal constants with
$\kappa(P(\boldsymbol\tau)^{-1}A(\boldsymbol\tau))$, proving
\cref{eq:uniformbound}.
\end{proof}

\section{Which subspaces must be resolved}\label{sec:necessary}

Fix a nonempty $J\subseteq[m]$.  The space $\N_J$ of \cref{eq:faces} consists
of the directions on which the penalties indexed by $J$ have no effect, however
large their weights, and its local span $\N_J^{\mathrm{loc}}$ consists of those
among them that the prescribed subspaces can assemble without leaving $\N_J$.
The analysis below quantifies the gap between these two spaces.

We hold $J$ fixed, freeze the inactive weights at $\bar\tau_k\ge0$ for
$k\notin J$, and let the remaining ones grow along a common ray:
\begin{equation}
 \begin{aligned}
 \bar A&=A_0+\sum_{k\notin J}\bar\tau_kK_k^\top K_k,\\
 A(t)&=\bar A+t\sum_{j\in J}K_j^\top K_j=\bar A+tA_J .
 \end{aligned}
 \label{eq:active-ray}
\end{equation}
Here $\bar A\succ0$ because $A_0$ is.  Write $A_i(t)=I_i^\top A(t)I_i$,
$P(t)^{-1}=\sum_i I_iA_i(t)^{-1}I_i^\top$, and
$\mathcal D(t)=\operatorname{diag}_iA_i(t)$.

The governing quantity is the least penalty energy required to decompose a
vector on which the penalty vanishes.  For $v\in V$ set
\begin{equation}
 \eta_J(v):=\min_{w\in\mathfrak D(v)}\norm{w}_{\mathcal D_J}^2
 =\min_{(v_i)\in\mathfrak D(v)}
   \sum_i\sum_{j\in J}\norm{K_jI_iv_i}_2^2 .
 \label{eq:kernel-obstruction}
\end{equation}

\begin{lemma}[Penalty-decomposition cost]\label{lem:obstruction}
The minimum in \cref{eq:kernel-obstruction} is attained, and there is a matrix
$G_J\succeq0$ with $\eta_J(v)=v^\top G_Jv$.  A linear map $S_J:V\to\W$ also
attains the minimum, so that $\mathcal IS_J=I_V$ and
$\norm{S_Jv}_{\mathcal D_J}^2=\eta_J(v)$ for every $v$.  Moreover
$\eta_J(v)=0$ if and only if $v\in\N_J^{\mathrm{loc}}$.
\end{lemma}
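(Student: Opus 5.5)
The plan is to treat \cref{eq:kernel-obstruction} as a convex quadratic minimization over an affine space and obtain the minimizer by a linear-algebra construction. Write $\mathfrak D(v)=w_0(v)+\ker\mathcal I$, where $w_0:V\to\W$ is any fixed linear right inverse of $\mathcal I$. Such a right inverse exists because $\mathcal I$ is onto, since $V=\sum_iI_iV_i$; for example, take $w_0=\mathcal I^\dagger$. Let $Z$ be a matrix whose columns form a basis of $\ker\mathcal I$. The problem then becomes minimizing $f(y)=\norm{w_0(v)+Zy}_{\mathcal D_J}^2$ over $y$, a convex quadratic that is bounded below by zero.

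First, I will show that the minimum is attained and is linear in $v$. The normal equations read $Z^\top\mathcal D_JZ\,y=-Z^\top\mathcal D_Jw_0(v)$, and they are always solvable. The right-hand side lies in $\range(Z^\top\mathcal D_J^{1/2})=\range(Z^\top\mathcal D_JZ)$, and every solution of the normal equations minimizes the convex $f$. Choosing the particular solution $y(v)=-(Z^\top\mathcal D_JZ)^\dagger Z^\top\mathcal D_Jw_0(v)$ gives a linear map $S_Jv:=w_0(v)+Zy(v)$. This map satisfies $\mathcal IS_J=I_V$, because $\mathcal IZ=0$, and it attains $\eta_J(v)$. Then $\eta_J(v)=\norm{S_Jv}_{\mathcal D_J}^2=v^\top S_J^\top\mathcal D_JS_Jv$, so $G_J:=S_J^\top\mathcal D_JS_J\succeq0$, since $\mathcal D_J\succeq0$. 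The cleanest way to state this step is through the projection view: with $E=\mathcal D_J^{1/2}$, the quantity $\eta_J(v)$ equals the squared norm of the component of $Ew_0(v)$ orthogonal to $\range(EZ)$. That component is the image under a fixed orthogonal projector, which makes both attainment and the quadratic form immediate.

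Second, I will prove the kernel characterization. Since $\mathcal D_J=\operatorname{diag}_i\bigl(\sum_{j\in J}I_i^\top K_j^\top K_jI_i\bigr)$, we have $\norm{w}_{\mathcal D_J}^2=0$ if and only if $K_jI_iv_i=0$ for all $i$ and all $j\in J$. This holds if and only if $I_iv_i\in\N_J$ for every $i$. Suppose $\eta_J(v)=0$. Attainment gives a decomposition $(v_i)$ with each $I_iv_i\in I_iV_i\cap\N_J$, so $v=\sum_iI_iv_i\in\N_J^{\mathrm{loc}}$. Conversely, suppose $v\in\N_J^{\mathrm{loc}}$. Then $v=\sum_iu_i$ with $u_i\in I_iV_i\cap\N_J$. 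Writing $u_i=I_iv_i$ yields a decomposition of zero $\mathcal D_J$-energy, so $\eta_J(v)=0$.

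The only delicate point is the first step: the attainment and linearity of the minimizer when $\mathcal D_J$ is merely semidefinite, so that the minimizer need not be unique. The pseudoinverse choice of minimizer, or the orthogonal-projector formulation above, handles this. Everything else is direct.
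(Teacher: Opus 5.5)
Your proposal is correct and matches the paper's proof. The paper also writes $\mathfrak D(v)=\{R_0v+Zc\}$ and identifies $\eta_J(v)=\norm{(I-\Pi)FR_0v}_2^2$, with $F=\mathcal D_J^{1/2}$ and $\Pi$ the orthogonal projector onto $\range(FZ)$. It then takes the least-norm minimizer $c=-(FZ)^\dagger FR_0v$, which is exactly your choice because $(FZ)^\dagger F=(Z^\top\mathcal D_JZ)^\dagger Z^\top\mathcal D_J$. The kernel characterization is argued the same way, through $\norm{w}_{\mathcal D_J}^2=0\iff I_iv_i\in\N_J$ for every $i$.
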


\begin{proof}
Fix a decomposition operator $R_0$, let the columns of $Z$ span
$\ker\mathcal I$, so that $\mathfrak D(v)=\{R_0v+Zc\}$, and put
$F=\mathcal D_J^{1/2}$.  Then
\begin{equation}
 \eta_J(v)=\min_c\norm{FR_0v+FZc}_2^2=\norm{(I-\Pi)FR_0v}_2^2 ,
\end{equation}
where $\Pi$ projects orthogonally onto $\range(FZ)$; the minimum distance to a
finite-dimensional subspace is attained.  Hence
$G_J=R_0^\top F(I-\Pi)FR_0\succeq0$
represents $\eta_J$, and the least-norm minimizer
$c=-(FZ)^\dagger FR_0v$ is linear in $v$, so
$S_J=R_0-Z(FZ)^\dagger FR_0$ is linear, satisfies $\mathcal IS_J=I_V$, and
attains the minimum.  Finally $\eta_J(v)=0$ exactly when some decomposition
has $K_jI_iv_i=0$ for all $i$ and all $j\in J$, that is, when $v$ is a sum of
vectors $I_iv_i\in\N_J$.
\end{proof}

Define the \emph{rate constant}
\begin{equation}
 \gamma_J=\max\left\{\frac{\eta_J(v)}{\norm v_{\bar A}^2}:
                     v\in\N_J\setminus\{0\}\right\},
 \label{eq:rate-constant}
\end{equation}
with $\gamma_J=0$ when $\N_J=\{0\}$.  If the columns of $U_J$ form a basis of
$\N_J$, then $\gamma_J$ is the largest eigenvalue of the pencil
$(U_J^\top G_JU_J,\,U_J^\top\bar AU_J)$, so it is computable by one
generalized eigenvalue problem.  \Cref{lem:obstruction} gives
\begin{equation}
 \gamma_J>0
 \quad\Longleftrightarrow\quad
 \N_J^{\mathrm{loc}}\ne\N_J .
 \label{eq:rate-positive}
\end{equation}

\begin{theorem}[Necessary kernel condition]\label{thm:necessary}
With $\gamma_J$ as in \cref{eq:rate-constant},
\begin{equation}
 \kappa(P(t)^{-1}A(t))\ge\gamma_Jt \qquad (t\ge0).
 \label{eq:linearbad}
\end{equation}
In particular, if $\N_J^{\mathrm{loc}}\ne\N_J$ then $\gamma_J>0$ and the
condition number grows at least linearly along the ray.  Hence robustness for
all nonnegative parameters requires $\N_J^{\mathrm{loc}}=\N_J$ for every
nonempty $J\subseteq[m]$.
\end{theorem}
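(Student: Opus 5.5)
The plan is to bound the condition number below by the stable-decomposition constant alone, and then to test that constant on a vector in the joint kernel $\N_J$. By \cref{prop:identity} applied to $A(t)$, which is SPD for every $t\ge0$ because $\bar A\succ0$, we have $\kappa(P(t)^{-1}A(t))=C_{\rm sd}(A(t))\,C_{\rm ov}(A(t))$. Since $C_{\rm ov}\ge1$, this gives
\begin{equation}
 \kappa(P(t)^{-1}A(t))\ge C_{\rm sd}(A(t)).
\end{equation}
It therefore suffices to show $C_{\rm sd}(A(t))\ge\gamma_J t$. If $\N_J=\{0\}$, or if $t=0$, the claim is immediate because $\gamma_Jt=0$ and $\kappa\ge1$. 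So I assume $\N_J\ne\{0\}$ and $t>0$.

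The key step is the splitting of the block-diagonal energy along the ray. By \cref{eq:active-ray}, $A_i(t)=I_i^\top\bar AI_i+t\,I_i^\top A_JI_i$. Hence $\mathcal D(t)=\bar{\mathcal D}+t\mathcal D_J$, where $\bar{\mathcal D}=\operatorname{diag}_i(I_i^\top\bar AI_i)\succeq0$. Take $v\in\N_J\setminus\{0\}$. Then $A_Jv=0$, so $\norm v_{A(t)}^2=\norm v_{\bar A}^2$, independent of $t$. For every $w\in\mathfrak D(v)$,
\begin{equation}
 \norm w_{\mathcal D(t)}^2=\norm w_{\bar{\mathcal D}}^2+t\norm w_{\mathcal D_J}^2\ge t\norm w_{\mathcal D_J}^2\ge t\,\eta_J(v),
\end{equation}
where the last inequality is the definition \cref{eq:kernel-obstruction}. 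Taking the minimum over $w$, and using that it is attained by \cref{prop:identity}, gives
\begin{equation}
 \min_{w\in\mathfrak D(v)}\norm w_{\mathcal D(t)}^2\ge t\,\eta_J(v).
\end{equation}
Dividing by $\norm v_{A(t)}^2=\norm v_{\bar A}^2$ and restricting the supremum defining $C_{\rm sd}(A(t))$ to $v\in\N_J$ yields $C_{\rm sd}(A(t))\ge t\,\eta_J(v)/\norm v_{\bar A}^2$. Maximizing over $v\in\N_J\setminus\{0\}$, with the maximum attained by \cref{eq:rate-constant}, gives $C_{\rm sd}(A(t))\ge\gamma_Jt$, and hence \cref{eq:linearbad}.

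For the remaining assertions, \cref{eq:rate-positive} (a consequence of \cref{lem:obstruction}) turns $\N_J^{\mathrm{loc}}\ne\N_J$ into $\gamma_J>0$, so the bound grows linearly in $t$. In that case the condition number is unbounded over the orthant, because the ray lies in $[0,\infty)^m$. Taking the contrapositive for each nonempty $J$ gives the necessity statement.

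I expect no real obstacle. The only point needing care is that the penalty energy drops out of the $A(t)$-norm of $v$ but not out of the local energies $\mathcal D(t)$. That asymmetry is exactly what drives the estimate, and it rests on the additive splitting $\mathcal D(t)=\bar{\mathcal D}+t\mathcal D_J$ with $\bar{\mathcal D}\succeq0$.
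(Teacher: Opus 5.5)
Your proposal is correct and follows essentially the same argument as the paper. You test the stable-decomposition constant on vectors of $\N_J$, where $\norm v_{A(t)}^2=\norm v_{\bar A}^2$ while every decomposition satisfies $\norm w_{\mathcal D(t)}^2\ge t\,\eta_J(v)$; this gives $C_{\rm sd}(A(t))\ge t\gamma_J$, and $C_{\rm ov}\ge1$ together with \cref{eq:rate-positive} then yields the remaining claims.
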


\begin{proof}
Let $v\in\N_J$ be nonzero.  Then $v^\top A(t)v=v^\top\bar Av$, while every
$w\in\mathfrak D(v)$ satisfies
$\norm w_{\mathcal D(t)}^2\ge t\norm w_{\mathcal D_J}^2\ge t\eta_J(v)$.
Taking the supremum over such $v$ in \cref{eq:schwarz-constants} gives
$C_{\rm sd}(A(t))\ge t\gamma_J$.  Since $C_{\rm ov}(A(t))\ge1$,
\cref{prop:identity} proves \cref{eq:linearbad}.  The remaining statements
follow from \cref{eq:rate-positive}.
\end{proof}

The converse requires a right inverse that respects a nested sequence of
kernels.  We first record the elementary linear-algebra construction.

\begin{lemma}[Filtered right inverse]\label{lem:filtered}
Let $\mathcal I:\W_0\to\N_0$ be onto, and let
\begin{equation}
 \W_0\supseteq\W_1\supseteq\cdots\supseteq\W_m,
 \qquad
 \N_0\supseteq\N_1\supseteq\cdots\supseteq\N_m
\end{equation}
be flags of finite-dimensional spaces such that
$\mathcal I(\W_k)=\N_k$ for $k=0,\ldots,m$.  Then there is a linear map
$R:\N_0\to\W_0$ such that $\mathcal IR=I_{\N_0}$ and
$R\N_k\subseteq\W_k$ for every $k$.
\end{lemma}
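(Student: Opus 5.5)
We build $R$ from the bottom of the flags upward, one complement at a time, so that each $\N_k$ is handled only by vectors already forced into $\W_k$.

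For each $k=0,\ldots,m-1$ choose a complement $X_k$ of $\N_{k+1}$ in $\N_k$, so that $\N_k=X_k\oplus\N_{k+1}$. Set $X_m=\N_m$. This yields the direct-sum decomposition
\begin{equation*}
 \N_0=X_0\oplus X_1\oplus\cdots\oplus X_m,
 \qquad
 \N_k=X_k\oplus\cdots\oplus X_m .
\end{equation*}
This follows by induction from the nested structure of the flag.

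On each piece $X_k$ we need a linear map $R^{(k)}:X_k\to\W_k$ with $\mathcal I R^{(k)}=I_{X_k}$. Such a map exists because $\mathcal I$ maps $\W_k$ onto $\N_k\supseteq X_k$. Concretely, fix a basis $x_1,\ldots,x_p$ of $X_k$. For each $x_r$ choose any preimage $w_r\in\W_k$ with $\mathcal I w_r=x_r$, and extend linearly. Alternatively, take $R^{(k)}=(\mathcal I|_{\W_k})^\dagger$ restricted to $X_k$. Define $R$ on $\N_0$ by $R=\sum_k R^{(k)}\Pi_k$, where $\Pi_k$ is the projection onto $X_k$ along the other summands. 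Then $R$ is linear, and
\begin{equation*}
 \mathcal I R=\sum_k\Pi_k=I_{\N_0}.
\end{equation*}

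It remains to check that $R$ respects the flags. Take $v\in\N_k$. Then $v$ has components only in $X_\ell$ for $\ell\ge k$. Each such component is sent into $\W_\ell\subseteq\W_k$, because the flag $\W_\bullet$ is decreasing. Hence $Rv\in\W_k$.

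None of these steps is a serious obstacle. The only point that needs care is choosing the complements $X_k$ inside $\N_k$ rather than in the ambient space. This choice is what makes $\N_k$ equal to the sum of the $X_\ell$ with $\ell\ge k$, and that equality is exactly what the final inclusion check uses. The hypothesis $\mathcal I(\W_k)=\N_k$ is needed only through the inclusion $\supseteq$, which guarantees the preimages exist. Surjectivity of $\mathcal I$ onto $\N_0$ is the case $k=0$.
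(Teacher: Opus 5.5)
Your proposal is correct and follows the paper's proof essentially verbatim: choose complements $X_k$ of $\N_{k+1}$ inside $\N_k$ (with $X_m=\N_m$), lift each $X_k$ linearly into $\W_k$ using $\mathcal I(\W_k)=\N_k$, take the direct sum of the lifts, and check the inclusion $R\N_k\subseteq\W_k$ through the nesting of the $\W$-flag. Your added observation that only the inclusion $\mathcal I(\W_k)\supseteq\N_k$ is used is also correct, and it is in fact necessary, since $\N_k=\mathcal IR\N_k\subseteq\mathcal I(\W_k)$.
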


\begin{proof}
Choose $X_k$ such that $\N_k=X_k\oplus\N_{k+1}$ for $k<m$, and set
$X_m=\N_m$.  For each $k$, surjectivity of
$\mathcal I:\W_k\to\N_k$ gives a linear lift $E_k:X_k\to\W_k$ satisfying
$\mathcal IE_k=I_{X_k}$.  Since $\N_0=\bigoplus_{k=0}^mX_k$, the direct sum of
the maps $E_k$ defines a right inverse $R$.  If $v\in\N_k$, only its
components in $X_k,\ldots,X_m$ can be nonzero, and their lifts belong to
$\W_k$ because the flag is nested.
\end{proof}

\begin{theorem}[Characterization by joint-kernel decompositions]
\label{thm:characterization}
Fix the finite-dimensional family in \cref{eq:family}, and suppose that the
subspaces in \cref{eq:schwarz} span $V$.  The exact additive
subspace-correction preconditioner satisfies
\begin{equation}
 \sup_{\boldsymbol\tau\in[0,\infty)^m}
 \kappa\,\left(P(\boldsymbol\tau)^{-1}A(\boldsymbol\tau)\right)<\infty
 \label{eq:orthant-robustness}
\end{equation}
if and only if
\begin{equation}
 \N_J^{\mathrm{loc}}=\N_J
 \qquad\text{for every nonempty }J\subseteq[m].
 \label{eq:all-subset-condition}
\end{equation}
\end{theorem}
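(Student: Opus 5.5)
Necessity is already contained in \cref{thm:necessary}: if \cref{eq:all-subset-condition} fails for some nonempty $J$, then $\gamma_J>0$ by \cref{eq:rate-positive}. Taking all frozen weights $\bar\tau_k=0$, the ray $A(t)$ lies in the orthant and $\kappa\ge\gamma_Jt\to\infty$. The substance is therefore sufficiency. The plan is to cover the orthant by the $m!$ ordering cones $\{\tau_{\pi(1)}\ge\cdots\ge\tau_{\pi(m)}\ge0\}$, one for each permutation $\pi$, and to prove a uniform bound on each cone, as in \cref{fig:orthant}(b). Since $C_{\rm ov}\le s$ by \cref{prop:identity}, it suffices to bound $C_{\rm sd}$ on each cone by constructing one decomposition operator $R_\pi$ whose energy is controlled there.

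Fix $\pi$ and set $J_k=\{\pi(1),\ldots,\pi(k)\}$ for $k=0,\ldots,m$, with $J_0=\varnothing$. Abel summation with $\tau_{\pi(m+1)}:=0$ gives
\begin{equation}
 A(\boldsymbol\tau)=A_0+\sum_{k=1}^m(\tau_{\pi(k)}-\tau_{\pi(k+1)})A_{J_k},
\end{equation}
and all coefficients are nonnegative on the cone. The same telescoping holds for the local operators $\mathcal D_\ell$, with $\mathcal D_{J_k}$ in place of $A_{J_k}$. The operator family on the cone is thus a nonnegative combination of $A_0,A_{J_1},\ldots,A_{J_m}$, and the kernels $\N_{J_1}\supseteq\cdots\supseteq\N_{J_m}$ are nested. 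Define $\W_k=\bigoplus_i\{v_i\in V_i: I_iv_i\in\N_{J_k}\}$. Then $\mathcal I(\W_k)=\N_{J_k}^{\mathrm{loc}}=\N_{J_k}$ by hypothesis, $\mathcal I(\W_0)=V$ because the subspaces span $V$, and the $\W_k$ are nested. \Cref{lem:filtered} then supplies $R_\pi$ with $\mathcal IR_\pi=I_V$ and $R_\pi\N_{J_k}\subseteq\W_k$.

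Next comes the componentwise stability of $R_\pi$ with respect to the $m+1$ forms $A_0,A_{J_1},\ldots,A_{J_m}$. Write $H_k=R_\pi^\top\mathcal D_{J_k}R_\pi\succeq0$, so that $\norm{R_\pi v}_{\mathcal D_{J_k}}^2=v^\top H_kv$. If $v\in\N_{J_k}=\ker A_{J_k}$, then every component $I_iR_{\pi,i}v$ lies in $\N_{J_k}$, and hence $H_kv=0$. \Cref{lem:domination} therefore yields a finite $C_k$, given by a generalized eigenvalue, with $H_k\preceq C_kA_{J_k}$. For $A_0\succ0$ a finite constant exists trivially. Summing with the telescoped nonnegative coefficients, exactly as in the proof of \cref{thm:uniform}, gives $C_{\rm sd}(A(\boldsymbol\tau))\le\max_kC_k$ on the cone. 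Together with $C_{\rm ov}\le s$, this gives $\kappa\le s\max_kC_k$. Taking the maximum over the finitely many $\pi$ yields \cref{eq:orthant-robustness}.

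The main obstacle I expect is the telescoping step. The coefficient of $A_{J_k}$ must be nonnegative, and the forms that appear must have exactly the nested kernels $\N_{J_k}$; this works because $\ker A_{J_k}=\bigcap_{j\in J_k}\ker K_j$ and because the telescoping is applied consistently to both $A$ and $\mathcal D$. The hypothesis is needed only for the chain subsets $J_k$, but since every nonempty $J$ occurs as some $J_k$ for a suitable $\pi$, all of the conditions in \cref{eq:all-subset-condition} are used.
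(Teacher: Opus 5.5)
Your proposal is correct and follows essentially the same route as the paper. Necessity comes from \cref{thm:necessary}. For sufficiency you build, on each ordering cone, a filtered right inverse $R_\pi$ from \cref{lem:filtered}, bound each face by \cref{lem:domination} after telescoping $A(\boldsymbol\tau)$ and $\mathcal D(\boldsymbol\tau)$ into the nested forms $A_{J_k}$ and $\mathcal D_{J_k}$, and combine this with $C_{\rm ov}\le s$ over the $m!$ cones.
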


\begin{proof}
Necessity follows from \cref{thm:necessary}.  To prove sufficiency, define
$\W_J=\{(v_i)_i\in\W:I_iv_i\in\N_J\text{ for every }i\}$, so that
$\mathcal I(\W_J)=\N_J^{\mathrm{loc}}$.  Fix a permutation
$\pi=(\pi_1,\ldots,\pi_m)$ of $[m]$, set $J_k=\{\pi_1,\ldots,\pi_k\}$ for
$1\le k\le m$, and put $J_0=\varnothing$, $\N_{J_0}=V$, $\W_{J_0}=\W$.
Condition \cref{eq:all-subset-condition} and \cref{lem:filtered} give a right
inverse $R_\pi:V\to\W$ with
\begin{equation}
 \mathcal IR_\pi=I_V,
 \qquad R_\pi\N_{J_k}\subseteq\W_{J_k},
 \quad 0\le k\le m.
 \label{eq:filtered-splitting}
\end{equation}

By \cref{eq:faces}, $\ker A_{J_k}=\N_{J_k}$ and
$\ker\mathcal D_{J_k}=\W_{J_k}$, so the second relation in
\cref{eq:filtered-splitting} gives
$\ker A_{J_k}\subseteq\ker(R_\pi^\top\mathcal D_{J_k}R_\pi)$.  By
\cref{lem:domination}, there are constants $c_{\pi,k}<\infty$ with
\begin{equation}
 \begin{aligned}
 R_\pi^\top\mathcal D_0R_\pi&\preceq c_{\pi,0}A_0,\\
 R_\pi^\top\mathcal D_{J_k}R_\pi
   &\preceq c_{\pi,k}A_{J_k},
   &&1\le k\le m.
 \end{aligned}
 \label{eq:cone-component-bounds}
\end{equation}
The first inequality uses $A_0\succ0$; if $A_{J_k}=0$, kernel preservation
also makes the left-hand side zero, so $c_{\pi,k}=0$.

Now let $\tau_{\pi_1}\ge\cdots\ge\tau_{\pi_m}\ge0$, set
$\tau_{\pi_{m+1}}=0$, and put $\beta_k=\tau_{\pi_k}-\tau_{\pi_{k+1}}\ge0$.
These coefficients give the telescoping identities
\begin{equation}
 \begin{aligned}
 A(\boldsymbol\tau)
   &=A_0+\sum_{k=1}^m\beta_kA_{J_k},\\
 \mathcal D(\boldsymbol\tau)
   &=\mathcal D_0+\sum_{k=1}^m\beta_k\mathcal D_{J_k},
 \end{aligned}
 \label{eq:cone-telescoping}
\end{equation}
where $\mathcal D(\boldsymbol\tau)=\operatorname{diag}_iA_i(\boldsymbol\tau)$.
With $C_\pi=\max_{0\le k\le m}c_{\pi,k}$,
\crefrange{eq:cone-component-bounds}{eq:cone-telescoping} give
\begin{equation}
 \norm{R_\pi v}_{\mathcal D(\boldsymbol\tau)}^2
 \le C_\pi\norm v_{A(\boldsymbol\tau)}^2
 \qquad\text{for every }v\in V,
 \label{eq:cone-stability}
\end{equation}
so $C_{\rm sd}(A(\boldsymbol\tau))\le C_\pi$ on this ordering cone.  The $m!$
cones cover the orthant and $C_{\rm ov}(A(\boldsymbol\tau))\le s$ by
\cref{prop:identity}, so
\begin{equation}
 \kappa\,\left(P(\boldsymbol\tau)^{-1}A(\boldsymbol\tau)\right)
 \le s\max_\pi C_\pi
 \label{eq:cone-orthant}
\end{equation}
throughout, proving \cref{eq:orthant-robustness}.
\end{proof}

The stable-decomposition constant on an active ray has the following
asymptotic form.  The resulting dichotomy also rules out intermediate powers
of $t$ in the condition number.

\begin{theorem}[Asymptotic smallest-eigenvalue rate]\label{thm:rate}
In the setting of \cref{eq:active-ray},
\begin{equation}
 \begin{aligned}
 \lim_{t\to\infty}\frac{C_{\rm sd}(A(t))}{t}&=\gamma_J,\\
 \lim_{t\to\infty}t\,\lambda_{\min}\,\left(P(t)^{-1}A(t)\right)
 &=\gamma_J^{-1},
 \end{aligned}
 \label{eq:sharp-rate}
\end{equation}
with the convention $\gamma_J^{-1}=\infty$ when $\gamma_J=0$.  Exactly one of
the following holds.
\begin{enumerate}
 \item $\N_J^{\mathrm{loc}}=\N_J$, and
 $\sup_{t\ge0}\kappa(P(t)^{-1}A(t))<\infty$.
 \item $\N_J^{\mathrm{loc}}\ne\N_J$, and
 \begin{equation}
  \gamma_J\le\liminf_{t\to\infty}\frac{\kappa(P(t)^{-1}A(t))}{t}
  \le\limsup_{t\to\infty}\frac{\kappa(P(t)^{-1}A(t))}{t}
  \le s\gamma_J .
  \label{eq:rate-bracket}
 \end{equation}
\end{enumerate}
\end{theorem}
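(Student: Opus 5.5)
The plan is to prove the two limits in \cref{eq:sharp-rate} by matching a lower and an upper bound on $C_{\rm sd}(A(t))/t$. The dichotomy then follows from \cref{prop:identity}.

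\emph{Lower bound.} The proof of \cref{thm:necessary} already gives $C_{\rm sd}(A(t))\ge t\gamma_J$, so $\liminf_{t\to\infty} C_{\rm sd}(A(t))/t\ge\gamma_J$.

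\emph{Upper bound: the decomposition.} We need $\limsup_{t\to\infty} C_{\rm sd}(A(t))/t\le\gamma_J$. We use the linear minimizer $S_J$ of \cref{lem:obstruction} as a test decomposition. Write $\bar{\mathcal D}=\operatorname{diag}_i(I_i^\top\bar AI_i)$. Since $\bar A\succ0$ and the space is finite dimensional, there is a constant $c_0$ with $\norm{S_Jv}_{\bar{\mathcal D}}^2\le c_0\norm v_{\bar A}^2$. By construction, $\norm{S_Jv}_{\mathcal D_J}^2=v^\top G_Jv$.

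\emph{Upper bound: the key lemma.} For every $\varepsilon>0$ we claim there is $M_\varepsilon<\infty$ with
\begin{equation*}
 G_J\preceq(\gamma_J+\varepsilon)\bar A+M_\varepsilon A_J .
\end{equation*}
This is a Finsler-type statement. The form $H=G_J-(\gamma_J+\varepsilon)\bar A$ satisfies $v^\top Hv\le-\varepsilon\norm v_{\bar A}^2<0$ on $\ker A_J\setminus\{0\}=\N_J\setminus\{0\}$, by the definition \cref{eq:rate-constant} of $\gamma_J$. I would prove the claim by contradiction. Suppose that for each $M=n$ there is a unit vector $v_n$ with $v_n^\top Hv_n>n\,v_n^\top A_Jv_n$. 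Pass to a convergent subsequence $v_n\to v$ with $\norm v=1$. Since $v_n^\top Hv_n$ is bounded, $v_n^\top A_Jv_n\to0$, so $v\in\N_J$. In the limit we also get $v^\top Hv\ge0$. This contradicts $v^\top Hv<0$ on $\N_J\setminus\{0\}$. This step is the main technical point; the alternative is an explicit block (Schur complement) argument relative to $\N_J\oplus\N_J^{\perp}$.

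\emph{Upper bound: assembling.} Put $x=\norm v_{\bar A}^2$ and $y=\norm v_{A_J}^2$. Then
\begin{equation*}
 \norm{S_Jv}_{\mathcal D(t)}^2\le \bigl(c_0+t(\gamma_J+\varepsilon)\bigr)x+tM_\varepsilon y
 \le\max\bigl(c_0+t(\gamma_J+\varepsilon),\,M_\varepsilon\bigr)(x+ty).
\end{equation*}
Hence $C_{\rm sd}(A(t))/t\le\max\bigl(c_0/t+\gamma_J+\varepsilon,\,M_\varepsilon/t\bigr)$. Letting $t\to\infty$ and then $\varepsilon\to0$ gives the first limit, including the case $\gamma_J=0$. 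The second limit follows from $\lambda_{\min}=C_{\rm sd}^{-1}$ in \cref{prop:identity}.

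\emph{The dichotomy.} The two cases are mutually exclusive by \cref{eq:rate-positive}.

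In case 1, knowing only $\gamma_J=0$ does not bound the condition number, so we build a $t$-independent decomposition. Apply \cref{lem:filtered} to the two-step flags $\W\supseteq\W_J$ and $V\supseteq\N_J$, which is valid since $\mathcal I(\W_J)=\N_J^{\mathrm{loc}}=\N_J$. This gives $R$ with $\mathcal IR=I_V$ and $R\N_J\subseteq\W_J$. Then \cref{lem:domination} gives a constant $c$ with $R^\top\bar{\mathcal D}R\preceq c\bar A$ and $R^\top\mathcal D_JR\preceq cA_J$. Summing these yields $C_{\rm sd}(A(t))\le c$ for all $t\ge0$, and $C_{\rm ov}\le s$ bounds the condition number.

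In case 2, $\gamma_J>0$. Since $\kappa=C_{\rm sd}C_{\rm ov}$ and $1\le C_{\rm ov}\le s$, dividing by $t$ and applying the first limit gives \cref{eq:rate-bracket}.
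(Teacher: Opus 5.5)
Your proof is correct. The lower bound, the second limit, and the dichotomy match the paper's argument: your case~1 construction is the paper's appeal to \cref{thm:characterization} with one penalty, written out. The route differs for the upper bound $\limsup_{t\to\infty}C_{\rm sd}(A(t))/t\le\gamma_J$.

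\emph{The paper's route.} It splits $v=v_0+v_1$ $\bar A$-orthogonally with $v_0\in\N_J$. It tests with the composite decomposition $S_Jv_0+Rv_1$, where $R$ is an arbitrary right inverse, uses $\norm z_{A_J}^2\ge\mu\norm z_{\bar A}^2$ on the complement, and closes with the weighted inequality $\norm{x+y}_M^2\le(1+\epsilon)\norm x_M^2+(1+\epsilon^{-1})\norm y_M^2$. Every constant there is explicit, so the convergence is quantitative. Because $(1+\epsilon)$ multiplies $t\gamma_J$, however, the displayed bound optimized over $\epsilon$ gives only $C_{\rm sd}(A(t))\le t\gamma_J+O(t^{1/2})$.

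\emph{Your route.} You test with the single map $S_J$ on all of $v$ and isolate what is needed as the Finsler-type inequality $G_J\preceq(\gamma_J+\epsilon)\bar A+M_\epsilon A_J$. Your compactness proof of this inequality is sound. The assembly is then cleaner: no auxiliary $R$ or orthogonal splitting is needed, because $tM_\epsilon\norm v_{A_J}^2$ is absorbed by the $t\norm v_{A_J}^2$ term of $\norm v_{A(t)}^2$. The cost is that $M_\epsilon$ is non-explicit, so as written you get the limit but no rate.

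\emph{The Schur-complement variant.} The variant you mention removes this and is in fact sharper. Take $G_J\preceq g\bar A$. Cauchy--Schwarz in the $G_J$ semi-inner product bounds the cross term by $2\sqrt{\gamma_Jg}\,\norm{v_0}_{\bar A}\norm{v_1}_{\bar A}$, which gives $M_\epsilon=g(1+\gamma_J/\epsilon)/\mu$. Choosing $\epsilon=g/(\mu t)$ then yields $t\gamma_J\le C_{\rm sd}(A(t))\le t\gamma_J+c_0+g/\mu$ for every $t>0$.

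\emph{Consequence for case~1.} When $\gamma_J=0$, one has $\N_J\subseteq\ker G_J$, so \cref{lem:domination} gives $G_J\preceq MA_J$ directly and $S_J$ is itself uniformly stable. Your remark that $\gamma_J=0$ alone does not bound the condition number therefore undersells your own test decomposition. The separate filtered construction in case~1 is correct but avoidable.
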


\begin{proof}
Write $\mathcal D(t)=\bar{\mathcal D}+t\mathcal D_J$ with
$\bar{\mathcal D}=\operatorname{diag}_i(I_i^\top\bar AI_i)$; the proof of
\cref{thm:necessary} gives $C_{\rm sd}(A(t))\ge t\gamma_J$.

For the reverse inequality, let $S_J$ be the linear minimizer of
\cref{lem:obstruction} and $R$ any decomposition operator.  Because
$\bar A\succ0$, finite $\alpha,a,b$ exist with
$\norm{S_Jz}_{\bar{\mathcal D}}^2\le\alpha\norm z_{\bar A}^2$,
$\norm{Rz}_{\bar{\mathcal D}}^2\le a\norm z_{\bar A}^2$, and
$\norm{Rz}_{\mathcal D_J}^2\le b\norm z_{\bar A}^2$ for all $z\in V$.  Let
$\N_J^{\perp}$ be the $\bar A$-orthogonal complement of $\N_J$; since
$\ker A_J=\N_J$, the form $A_J$ is positive definite there, so
$\norm z_{A_J}^2\ge\mu\norm z_{\bar A}^2$ on $\N_J^{\perp}$ for some $\mu>0$.
Splitting $v=v_0+v_1$ with $v_0\in\N_J$ and $v_1\in\N_J^{\perp}$,
\begin{equation}
 \norm v_{A(t)}^2
 =\norm{v_0}_{\bar A}^2+\norm{v_1}_{\bar A}^2+t\norm{v_1}_{A_J}^2
 \ge\norm{v_0}_{\bar A}^2+(1+t\mu)\norm{v_1}_{\bar A}^2 .
 \label{eq:rate-denominator}
\end{equation}
Now $w=S_Jv_0+Rv_1\in\mathfrak D(v)$.  For $\epsilon>0$ and $M\succeq0$, the
Cauchy--Schwarz inequality gives
$\norm{x+y}_M^2\le(1+\epsilon)\norm x_M^2+(1+\epsilon^{-1})\norm y_M^2$.
Apply this inequality with $M=\mathcal D(t)$ and use
$\norm{S_Jv_0}_{\mathcal D(t)}^2\le(\alpha+t\gamma_J)\norm{v_0}_{\bar A}^2$
and $\norm{Rv_1}_{\mathcal D(t)}^2\le(a+tb)\norm{v_1}_{\bar A}^2$.  Dividing
the result by \cref{eq:rate-denominator} yields
\begin{equation}
 C_{\rm sd}(A(t))
 \le(1+\epsilon)(\alpha+t\gamma_J)
   +(1+\epsilon^{-1})\frac{a+tb}{1+t\mu}.
\end{equation}
Dividing by $t$ and letting $t\to\infty$ gives
$\limsup_tC_{\rm sd}(A(t))/t\le(1+\epsilon)\gamma_J$ for arbitrary
$\epsilon>0$, so $\lim_tC_{\rm sd}(A(t))/t=\gamma_J$, and
\cref{eq:sharp-rate} follows from
$\lambda_{\min}(P(t)^{-1}A(t))=C_{\rm sd}(A(t))^{-1}$.

For the dichotomy, if $\N_J^{\mathrm{loc}}=\N_J$ apply
\cref{thm:characterization} to the one-penalty family with base $\bar A\succ0$
and constraint $K_J$: its only kernel condition is
$\N_J^{\mathrm{loc}}=\N_J$, so the condition numbers are bounded on
$[0,\infty)$.  Otherwise $\gamma_J>0$ by \cref{eq:rate-positive}, and
\cref{eq:rate-bracket} follows from $\kappa=C_{\rm sd}C_{\rm ov}$ with
$1\le C_{\rm ov}\le s$ in \cref{prop:identity}.
\end{proof}

A single generalized eigenvalue therefore determines the leading term both of
$C_{\rm sd}(A(t))$ and of the reciprocal smallest eigenvalue.  For the
condition number itself the overlap constant intervenes, and since it is only
known to lie between $1$ and $s$, \cref{eq:rate-bracket} determines the order
of growth but not, in general, its coefficient.  \Cref{tab:sharp-rate}
compares these asymptotic results with computed spectra, and
\cref{sec:fe-negative} gives the corresponding finite-element calculation.

Decomposing the intersection of all the kernels is not sufficient.  The next
example decomposes $\N_{\{1,2\}}$ exactly, fails only on the singleton
$J=\{1\}$, and has a condition number that grows linearly; a
Scott--Vogelius realization of the same subset deficiency appears in
\cref{sec:fe-three}.

\begin{example}[A deficient single-penalty kernel]\label{ex:counterexample}
Let $e_1,e_2,e_3$ be the coordinate vectors in $\R^3$, and set
$A_0=I_{\R^3}$, $K_1=e_1^\top$, and $K_2=e_2^\top$, with subspaces
$V_1=\operatorname{span}\{e_3\}$,
$V_2=\operatorname{span}\{(e_1+e_2)/\sqrt2\}$, and
$V_3=\operatorname{span}\{(e_1-e_2)/\sqrt2\}$.
The subspaces span $\R^3$ and decompose
$\N_{\{1,2\}}=\operatorname{span}\{e_3\}$, but they do not decompose
$\N_{\{1\}}=\operatorname{span}\{e_2,e_3\}$.  For $\tau_1=t\ge1$ and
$\tau_2=1$, direct calculation gives
\begin{equation}
 P(t,1)^{-1}A(t,1)
 =\operatorname{diag}\,\left(\frac{2(t+1)}{t+3},
                              \frac{4}{t+3},1\right)
\end{equation}
in the basis $(e_1,e_2,e_3)$.  Hence
\begin{equation}
 \kappa(P(t,1)^{-1}A(t,1))=\frac{t+1}{2}.
 \label{eq:counterexample}
\end{equation}
Decomposing the full joint kernel therefore does not prevent deterioration
along a ray associated with a proper subset of the penalties.
\end{example}

The subset conditions need not yield a single decomposition operator that is
stable in every individual penalty energy.  The ordering-cone construction in
\cref{thm:characterization} accommodates this possibility.

\begin{example}[Ordering-dependent stability without a common splitting]
\label{ex:ordering-dependent}
Let $V=\R^2$, $A_0=I_{\R^2}$, and
\begin{equation}
 K_1=\begin{bmatrix}0&1\end{bmatrix},\qquad
 K_2=\begin{bmatrix}1&0\end{bmatrix},\qquad
 K_3=\begin{bmatrix}1&-1\end{bmatrix}.
\end{equation}
Take $V_1=V_2=V_3=\R$ with
$I_1a=ae_1$, $I_2b=be_2$, and $I_3c=c(e_1+e_2)$.  Each singleton kernel is
the range of the matching injection, and every joint kernel containing two or
more penalties is zero.  Thus \cref{eq:all-subset-condition} holds.

No common decomposition operator can preserve all three singleton kernels.
Indeed, kernel preservation would require $R(e_1)=(1,0,0)^\top$,
$R(e_2)=(0,1,0)^\top$, and $R(e_1+e_2)=(0,0,1)^\top$, which contradicts
linearity.  Nevertheless the exact additive preconditioner is uniformly
bounded: with $a=1+\tau_2+\tau_3$, $b=1+\tau_1+\tau_3$, and
$d=2+\tau_1+\tau_2$, direct calculation gives
\begin{equation}
 \begin{aligned}
 \operatorname{tr}(P^{-1}A)&=3,\\
 \det(P^{-1}A)
   &=2+\frac{2\tau_3(1+\tau_1)(1+\tau_2)}{abd}\ge2.
 \end{aligned}
 \label{eq:three-penalty-example}
\end{equation}
Since $P^{-1}A$ is similar to the SPD matrix $A^{1/2}P^{-1}A^{1/2}$, its two
eigenvalues are real and positive; their sum is $3$, and their product is at
least $2$ by \cref{eq:three-penalty-example}, so both lie in $[1,2]$ and
\begin{equation}
 \kappa\,\left(P(\boldsymbol\tau)^{-1}A(\boldsymbol\tau)\right)\le2
 \qquad\text{for every }\boldsymbol\tau\in[0,\infty)^3.
 \label{eq:three-penalty-bound}
\end{equation}
Thus the all-subset characterization is strictly weaker than the existence of
one common splitting that is stable in every component energy.
\end{example}

\subsection{How many kernel tests are needed}\label{sec:irredundance}

\Cref{eq:all-subset-condition} is a family of $2^m-1$ conditions, which invites
the question of how many are needed.  Two subsets give the same test when they
share a joint kernel, and only then.

\begin{proposition}[Reduction to distinct kernels]\label{prop:distinct}
The space $\N_J^{\mathrm{loc}}$ depends on $J$ only through the subspace
$\N_J$.  Hence the conditions in \cref{eq:all-subset-condition} are indexed by
\begin{equation}
 \mathcal K=\{\N_J:\varnothing\ne J\subseteq[m]\},
 \label{eq:distinct-kernels}
\end{equation}
the family of intersections of $\ker K_1,\ldots,\ker K_m$, and the number of
distinct tests is $\lvert\mathcal K\rvert\le2^m-1$.
\end{proposition}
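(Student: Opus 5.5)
The plan is to read the claim directly off the definition in \cref{eq:faces}. The space
\begin{equation*}
 \N_J^{\mathrm{loc}}=\sum_{i=1}^s(I_iV_i\cap\N_J)
\end{equation*}
is built from the data $J$ only through the intersections $I_iV_i\cap\N_J$. Those intersections depend only on the subspace $\N_J\subseteq V$ and on the fixed correction spaces $I_iV_i$. So I will introduce, for an arbitrary subspace $\N\subseteq V$, the assembled local span
\begin{equation*}
 \N^{\mathrm{loc}}:=\sum_{i=1}^s(I_iV_i\cap\N).
\end{equation*}
I will then observe that $\N_J^{\mathrm{loc}}$ is this operator evaluated at $\N=\N_J$.

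The consequence follows at once. If $J\ne J'$ are nonempty subsets with $\N_J=\N_{J'}$, then $\N_J^{\mathrm{loc}}=\N_{J'}^{\mathrm{loc}}$, and the two conditions $\N_J^{\mathrm{loc}}=\N_J$ and $\N_{J'}^{\mathrm{loc}}=\N_{J'}$ in \cref{eq:all-subset-condition} are literally the same statement. The family of conditions is therefore indexed by the set $\mathcal K$ of \cref{eq:distinct-kernels}, with one test $\N^{\mathrm{loc}}=\N$ for each $\N\in\mathcal K$.

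To identify $\mathcal K$ with the family of intersections of $\ker K_1,\ldots,\ker K_m$, I will use $\N_J=\bigcap_{j\in J}\ker K_j$ from \cref{eq:faces}. Every nonempty intersection of these kernels arises from some nonempty $J$, and conversely. The map $J\mapsto\N_J$ from the $2^m-1$ nonempty subsets onto $\mathcal K$ is surjective, which gives $\lvert\mathcal K\rvert\le2^m-1$.

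To support the remark before the proposition that two subsets give the same test only when they share a kernel, I will note that distinct subspaces in $\mathcal K$ give distinct conditions. Each condition refers to its own subspace $\N$, so no two of them are syntactically identical. No step poses a real obstacle here. The only care needed is to state clearly that ``depends only on $\N_J$'' means the operator $\N\mapsto\N^{\mathrm{loc}}$ is well defined on subspaces.
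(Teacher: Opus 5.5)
Your proposal is correct and matches the paper's proof: both read off from \cref{eq:faces} that $\N_J^{\mathrm{loc}}=\sum_i(I_iV_i\cap\N_J)$ is determined by the subspace $\N_J$, and then bound $\lvert\mathcal K\rvert$ by counting the image of $J\mapsto\N_J$ over the $2^m-1$ nonempty subsets. The paper identifies $\mathcal K$ with the family of kernel intersections through $\N_{J\cup J'}=\N_J\cap\N_{J'}$, so that $\mathcal K$ is closed under intersection and generated by the singleton kernels; this is the same observation as your statement that every nonempty intersection of the kernels arises as some $\N_J$.
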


\begin{proof}
By \cref{eq:faces}, $\N_J^{\mathrm{loc}}=\sum_i(I_iV_i\cap\N_J)$ is determined
by the subspace $\N_J$.  Since $\N_{J\cup J'}=\N_J\cap\N_{J'}$, the family
$\mathcal K$ is closed under intersection and is generated by the $m$
singleton kernels.
\end{proof}

Nested or repeated kernels can reduce the count substantially.  Without
additional structure, however, no further reduction is possible: each of the
$2^m-1$ tests can be the sole point of failure.

\begin{theorem}[Irredundance of the subset tests]\label{thm:irredundant}
Let $m\ge1$ and let $J_0\subseteq[m]$ be nonempty.  There is a family
\cref{eq:family} with $\dim V=\lvert J_0\rvert+1$, $A_0=I_V$, and
$s=2\lvert J_0\rvert$ one-dimensional subspaces spanning $V$, such that
\begin{equation}
 \N_J^{\mathrm{loc}}=\N_J
 \ \ \text{for every nonempty }J\ne J_0,
 \qquad
 \N_{J_0}^{\mathrm{loc}}\ne\N_{J_0}.
 \label{eq:irredundant}
\end{equation}
Consequently no condition in \cref{eq:all-subset-condition} is implied by the
remaining ones.  Taking $J_0=[m]$ makes the $2^m-1$ kernels distinct, so the
bound in \cref{prop:distinct} is attained.
\end{theorem}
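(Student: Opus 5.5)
We will prove \cref{thm:irredundant} with an explicit coordinate construction. The construction places the $J_0$-kernels along coordinate directions. The correction lines are chosen so that the one extra direction $e_0$ can be assembled only by pairing $e_k$ with $e_0+e_k$, and that pairing stops working exactly when every coordinate of $J_0$ is penalized.

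\emph{Setup.} Relabel so that $J_0=\{1,\ldots,p\}$ with $p=\lvert J_0\rvert\ge1$. Let $V=\R^{p+1}$ with basis $e_0,e_1,\ldots,e_p$ and $A_0=I_V$. Define the penalties by
\begin{equation*}
K_j=e_j^\top \ (j\in J_0),\qquad K_j=e_0^\top \ (j\notin J_0).
\end{equation*}
Take the $s=2p$ one-dimensional correction spaces
\begin{equation*}
\operatorname{span}\{e_k\},\qquad \operatorname{span}\{e_0+e_k\},\qquad k=1,\ldots,p.
\end{equation*}
These lines span $V$, since $e_0=(e_0+e_1)-e_1$. The joint kernels are then explicit:
\begin{equation*}
\N_J=\operatorname{span}\bigl(\{e_k:k\in J_0\setminus J\}\cup E_J\bigr),
\end{equation*}
where $E_J=\{e_0\}$ if $J\subseteq J_0$ and $E_J=\varnothing$ otherwise.

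\emph{Case check.} Each correction space is a line, so $I_iV_i\cap\N_J$ is either the whole line or zero. Hence $\N_J^{\mathrm{loc}}$ is simply the span of the correction lines contained in $\N_J$. There are four cases.
\begin{enumerate}
\item If $J\not\subseteq J_0$, then $\N_J$ is spanned by coordinate vectors $e_k$ with $k\ge1$. Each such $e_k$ is itself a correction line, so $\N_J^{\mathrm{loc}}=\N_J$; this includes the case $\N_J=\{0\}$ when $J\supseteq J_0$.
\item If $\varnothing\ne J\subsetneq J_0$, pick $k\in J_0\setminus J$. Then both $e_k$ and $e_0+e_k$ lie in $\N_J$, so $e_0\in\N_J^{\mathrm{loc}}$. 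All other $e_{k'}$ with $k'\in J_0\setminus J$ are lines in $\N_J$, hence $\N_J^{\mathrm{loc}}=\N_J$.
\item If $J=J_0$, then $\N_{J_0}=\operatorname{span}\{e_0\}$. No correction line lies in it, because each $e_k$ and each $e_0+e_k$ has a nonzero $e_k$-coordinate. Thus $\N_{J_0}^{\mathrm{loc}}=\{0\}\ne\N_{J_0}$.
\end{enumerate}
This establishes \cref{eq:irredundant}.

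\emph{Consequences.} By \cref{thm:characterization} and \cref{thm:necessary}, this family satisfies every subset condition except the one for $J_0$, and its condition number grows linearly along the $J_0$ ray. So the $J_0$ condition is not implied by the others. Now take $J_0=[m]$. Then every nonempty $J$ satisfies $J\subseteq J_0$, so $\N_J=\operatorname{span}\{e_0,e_k:k\notin J\}$. Distinct subsets $J$ give distinct coordinate spans, so the $2^m-1$ kernels are distinct and the bound in \cref{prop:distinct} is attained.

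The only delicate step is the case check. Its crux is choosing the penalties $K_j$ for $j\notin J_0$ so that adding any such index removes $e_0$ from the kernel. This makes every $J\ne J_0$ with $J\supseteq J_0$ harmless, since its kernel $\{0\}$ is trivially decomposed. The naive choice $K_j=0$ would fail here, because then $\N_J=\N_{J_0}$ for those $J$ and the $J_0$ failure would repeat.
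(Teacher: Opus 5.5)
Your proposal is correct and is essentially the paper's proof: the same basis (your $e_0,e_k$ are the paper's $z,u_j$), the same penalties $K_j=e_j^\top$ for $j\in J_0$ and $K_k=e_0^\top$ otherwise, the same $2p$ correction lines $e_k$ and $e_0+e_k$, the same case split $J\not\subseteq J_0$, $J\subsetneq J_0$, $J=J_0$, and the same coordinate-span distinctness argument for $J_0=[m]$. The only slip is cosmetic: you announce four cases but list three, and the three you give are already exhaustive.
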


\begin{proof}
Write $p=\lvert J_0\rvert$ and let $V=\R^{p+1}$ have the orthonormal basis
$\{z\}\cup\{u_j:j\in J_0\}$.  Set $A_0=I_V$, define $K_j=u_j^\top$ for
$j\in J_0$, and define $K_k=z^\top$ for $k\in[m]\setminus J_0$, identifying
each vector with the linear functional given by the Euclidean inner product.
Take the $2p$ one-dimensional subspaces spanned by $u_j$ and by $z+u_j$ for
$j\in J_0$.  They span $V$, since they contain every
$u_j$ and hence $z=(z+u_j)-u_j$.

Every $\N_J$ is spanned by a subset of the basis.  Writing $J'=J\cap J_0$,
\begin{equation}
 \N_J=
 \begin{cases}
  \operatorname{span}(\{z\}\cup\{u_k:k\in J_0\setminus J\}),
    &J\subseteq J_0,\\[2pt]
  \operatorname{span}\{u_k:k\in J_0\setminus J'\},
    &J\not\subseteq J_0 .
 \end{cases}
\end{equation}
A coordinate subspace of this basis contains $u_k$ exactly when
$k\in J_0\setminus J'$, and contains $z+u_k$ exactly when it contains both
$z$ and $u_k$.

If $J\subsetneq J_0$ then $J_0\setminus J\ne\varnothing$, so
$\N_J^{\mathrm{loc}}$ contains $u_k$ and $z+u_k$ for every
$k\in J_0\setminus J$, hence also $z$, giving $\N_J^{\mathrm{loc}}=\N_J$.  If
$J\not\subseteq J_0$ then $\N_J$ is spanned by the $u_k$ with
$k\in J_0\setminus J'$, each spanning a prescribed subspace, so again
$\N_J^{\mathrm{loc}}=\N_J$.  Finally $\N_{J_0}=\operatorname{span}\{z\}$
contains no prescribed subspace, so $\N_{J_0}^{\mathrm{loc}}=\{0\}$, proving
\cref{eq:irredundant}.

For $J_0=[m]$ the displayed formula gives
$\N_J=\operatorname{span}(\{z\}\cup\{u_k:k\notin J\})$ for every nonempty
$J\subseteq[m]$, and these $2^m-1$ subspaces are pairwise distinct.
\end{proof}

The construction in \cref{thm:irredundant} is evaluated for $m\le4$ in
\cref{sec:mac-several}.

Taking $m=2$ and $J_0=\{1,2\}$ in \cref{thm:irredundant} gives the opposite
failure pattern to \cref{ex:counterexample}: in $\R^3$ with $K_j=u_j^\top$ and
the four subspaces spanned by $u_1,u_2,z+u_1,z+u_2$, both singleton tests pass
while the pair test fails.  Neither the singleton tests nor the
full-intersection test can therefore be dispensed with in favor of the other,
and in both cases \cref{thm:rate} converts the failure into linear growth along
the corresponding ray.

\section{Computable bounds and the kernel lattice}\label{sec:quantitative}

\Cref{thm:characterization} decides robustness but leaves the resulting bound
implicit in its proof.  We now make it explicit.  The constants produced by the
ordering-cone argument are generalized eigenvalues and are therefore
computable, and a structural condition on the kernels determines when the $m!$
cone constructions reduce to one.

\Needspace{6\baselineskip}
\subsection{A computable bound on each ordering cone}

\begin{corollary}[Computable orthant bound]\label{cor:cone}
Assume \cref{eq:all-subset-condition}.  For each permutation $\pi$ of $[m]$
let $R_\pi$ satisfy \cref{eq:filtered-splitting}, write
$J_k=\{\pi_1,\ldots,\pi_k\}$, and let
\begin{equation}
 \begin{aligned}
 c_{\pi,0}&=\min\{c:R_\pi^\top\mathcal D_0R_\pi\preceq cA_0\},\\
 c_{\pi,k}&=\min\{c:R_\pi^\top\mathcal D_{J_k}R_\pi\preceq cA_{J_k}\},
   &&1\le k\le m .
 \end{aligned}
 \label{eq:cone-constants}
\end{equation}
With $q=\max_{0\le\ell\le m}q_\ell$ from \cref{eq:overlap},
\begin{equation}
 \sup_{\boldsymbol\tau\in[0,\infty)^m}
 \kappa\,\left(P(\boldsymbol\tau)^{-1}A(\boldsymbol\tau)\right)
 \le q\,\max_\pi\ \max_{0\le k\le m}c_{\pi,k}.
 \label{eq:cone-bound}
\end{equation}
For a nonzero denominator in \cref{eq:cone-constants}, the corresponding
constant is the largest generalized eigenvalue on the denominator's range; if
the denominator vanishes, the constant is zero.  Thus the right-hand side of
\cref{eq:cone-bound} is computable; it is evaluated in \cref{sec:mac-several}.
\end{corollary}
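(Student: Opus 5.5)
The proof retraces the sufficiency half of \cref{thm:characterization} with every constant made explicit, and with the crude overlap bound $C_{\rm ov}\le s$ replaced by $q$. The argument runs through \cref{prop:identity}, which gives $\kappa=C_{\rm sd}(A(\boldsymbol\tau))\,C_{\rm ov}(A(\boldsymbol\tau))$. It therefore suffices to bound each of the two factors uniformly on the orthant.

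\emph{Stable-decomposition factor.} Fix a permutation $\pi$ and the filtered splitting $R_\pi$ of \cref{eq:filtered-splitting}, which exists by \cref{lem:filtered} under \cref{eq:all-subset-condition}. Relation \cref{eq:filtered-splitting} gives $R_\pi\N_{J_k}\subseteq\W_{J_k}=\ker\mathcal D_{J_k}$. Hence $\ker A_{J_k}\subseteq\ker(R_\pi^\top\mathcal D_{J_k}R_\pi)$, and \cref{lem:domination} shows that the minima in \cref{eq:cone-constants} exist. The same lemma identifies each minimum as the largest eigenvalue of the pencil restricted to $(\ker A_{J_k})^\perp$, or as zero when $A_{J_k}=0$; this is the computability claim. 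For $c_{\pi,0}$ no restriction is needed, since $A_0\succ0$. On the cone $\tau_{\pi_1}\ge\cdots\ge\tau_{\pi_m}$, the telescoping identities \cref{eq:cone-telescoping} with $\beta_k\ge0$ let us add the bounds termwise. This yields $\norm{R_\pi v}^2_{\mathcal D(\boldsymbol\tau)}\le\max_k c_{\pi,k}\norm v^2_{A(\boldsymbol\tau)}$, so $C_{\rm sd}(A(\boldsymbol\tau))\le\max_kc_{\pi,k}$ on that cone. The $m!$ cones cover $[0,\infty)^m$, so taking the maximum over $\pi$ bounds $C_{\rm sd}$ on the whole orthant.

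\emph{Overlap factor.} This is the step that differs from the proof of \cref{thm:characterization}. Write $\theta_0=1$, $\theta_j=\tau_j$. For any $w=(v_i)_i\in\W$, apply \cref{eq:overlap} for each $\ell$ and sum with the nonnegative weights $\theta_\ell$:
\begin{equation}
 \norm{\mathcal Iw}_{A(\boldsymbol\tau)}^2=\sum_\ell\theta_\ell\norm{\mathcal Iw}_{A_\ell}^2\le\sum_\ell\theta_\ell q_\ell\sum_i\norm{v_i}_{A_{\ell,i}}^2\le q\norm w^2_{\mathcal D(\boldsymbol\tau)}.
\end{equation}
This is exactly the overlap argument of \cref{thm:uniform}, and it gives $C_{\rm ov}(A(\boldsymbol\tau))\le q$ for every $\boldsymbol\tau$.

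Multiplying the two bounds gives \cref{eq:cone-bound}. I expect no real obstacle, since every step is either linear bookkeeping or a direct appeal to earlier results. The only points needing care are the degenerate cases. One is a zero denominator in \cref{eq:cone-constants}, where kernel preservation forces the numerator to vanish, so the constant is zero. The other is checking that the overlap summation needs no stability of $R_\pi$, because $C_{\rm ov}$ is a supremum over all of $\W$.
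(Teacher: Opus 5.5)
Your proposal is correct and follows the paper's proof essentially step for step. Kernel preservation and \cref{lem:domination} give finiteness and computability of the $c_{\pi,k}$; the telescoping identities give $C_{\rm sd}\le\max_k c_{\pi,k}$ on each ordering cone; summing \cref{eq:overlap} with weights $\theta_\ell$ gives $C_{\rm ov}\le q$ on the whole orthant; and \cref{prop:identity} combines the two bounds, with your handling of the degenerate cases matching the paper's conventions.
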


\begin{proof}
Each $c_{\pi,k}$ is finite by \cref{eq:cone-component-bounds}, and
\cref{lem:domination} identifies it with a largest generalized eigenvalue.  On
the cone $\tau_{\pi_1}\ge\cdots\ge\tau_{\pi_m}\ge0$, the telescoping identities
\cref{eq:cone-telescoping} have nonnegative coefficients $\beta_k$, so
\cref{eq:cone-constants} gives
$\norm{R_\pi v}_{\mathcal D(\boldsymbol\tau)}^2\le
 (\max_kc_{\pi,k})\norm v_{A(\boldsymbol\tau)}^2$, whence
$C_{\rm sd}(A(\boldsymbol\tau))\le\max_kc_{\pi,k}$ there.  For the overlap
constant, \cref{eq:overlap} and linearity of the energy in
$\boldsymbol\tau$ give, for every $w\in\W$,
\begin{equation}
 \norm{\mathcal Iw}_{A(\boldsymbol\tau)}^2
 =\sum_{\ell=0}^m\theta_\ell\norm{\mathcal Iw}_{A_\ell}^2
 \le\sum_{\ell=0}^m\theta_\ell q_\ell\norm w_{\mathcal D_\ell}^2
 \le q\norm w_{\mathcal D(\boldsymbol\tau)}^2 ,
\end{equation}
with $\theta_0=1$ and $\theta_j=\tau_j$, so $C_{\rm ov}\le q$ on the whole
orthant.  The $m!$ cones cover the orthant, and \cref{prop:identity} converts
the two bounds into \cref{eq:cone-bound}.
\end{proof}

When a common splitting is available and is used on every cone, the resulting
bound is no larger than the classical one.

\begin{proposition}[Comparison with the common-splitting bound]
\label{prop:refines}
Suppose a decomposition operator $R$ satisfies \cref{eq:componentstable} with
constants $C_\ell$.  Then $R$ is admissible in \cref{eq:filtered-splitting}
for every $\pi$, and the choice $R_\pi=R$ gives
$\max_\pi\max_kc_{\pi,k}\le\max_\ell C_\ell$.  Hence \cref{eq:cone-bound} is
at most the bound $Cq$ of \cref{eq:uniformbound}, and \cref{thm:uniform} is
the special case in which one splitting serves every ordering.
\end{proposition}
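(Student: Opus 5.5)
The plan has three steps: show that $R$ preserves every joint kernel, bound each $c_{\pi,k}$ by summing the componentwise estimates \cref{eq:componentstable}, and then compare with \cref{thm:uniform}.

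\textbf{Admissibility.} We must show $R\N_{J}\subseteq\W_{J}$ for every nonempty $J$, which covers every flag member $J_k$ for every $\pi$. The case $k=0$ is trivial because $\W_\varnothing=\W$.

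Take $v\in\N_J$ and $j\in J$. Then $v\in\ker A_j$. By \cref{eq:componentstable} with $\ell=j$,
\[
\sum_i (R_iv)^\top A_{j,i}(R_iv)\le C_j\,v^\top A_jv=0 .
\]
Each summand equals $\norm{K_jI_iR_iv}_2^2\ge0$, so $K_jI_iR_iv=0$ for every $i$. As this holds for each $j\in J$, we get $I_iR_iv\in\N_J$ for all $i$, that is, $Rv\in\W_J$. Since $\mathcal IR=I_V$ by definition of a decomposition operator, $R$ satisfies \cref{eq:filtered-splitting} for every permutation $\pi$.

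\textbf{Bounding the cone constants.} For $k=0$, inequality \cref{eq:componentstable} with $\ell=0$ reads $R^\top\mathcal D_0R\preceq C_0A_0$. Minimality in \cref{eq:cone-constants} then gives $c_{\pi,0}\le C_0$.

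For $k\ge1$, use the definitions $\mathcal D_{J_k}=\sum_{j\in J_k}\mathcal D_j$ and $A_{J_k}=\sum_{j\in J_k}A_j$ from \cref{eq:faces}. Summing the estimates \cref{eq:componentstable} over $j\in J_k$ gives
\[
R^\top\mathcal D_{J_k}R=\sum_{j\in J_k}R^\top\mathcal D_jR\preceq\sum_{j\in J_k}C_jA_j\preceq\Big(\max_{j}C_j\Big)A_{J_k}.
\]
Hence $c_{\pi,k}\le\max_\ell C_\ell$. If $A_{J_k}=0$, the left side is also zero and $c_{\pi,k}=0$, which is consistent with this bound.

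\textbf{Conclusion.} Taking the maximum over $k$ and $\pi$ gives $\max_\pi\max_kc_{\pi,k}\le C$. The overlap factor $q$ in \cref{eq:cone-bound} is the same $q$ as in \cref{eq:uniformbound}. Therefore, when $R_\pi=R$ is used on every cone, the right side of \cref{eq:cone-bound} is at most $Cq$.

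\textbf{Main obstacle.} The only step needing care is the first: deducing kernel preservation from the energy inequality. It relies on positivity of each individual summand $\norm{K_jI_iR_iv}_2^2$. The remaining steps are Löwner-order bookkeeping.
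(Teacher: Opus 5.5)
Your proposal is correct and follows the paper's proof essentially step for step. Kernel preservation comes from the vanishing of each nonnegative summand $\norm{K_jI_iR_iv}_2^2$ in \cref{eq:componentstable}, and the bound comes from summing the L\"owner inequalities $R^\top\mathcal D_jR\preceq C_jA_j$ over $j\in J_k$; your separate treatment of $k=0$ through the $\ell=0$ inequality spells out a case the paper leaves implicit.
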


\begin{proof}
As noted after \cref{eq:componentstable}, stability in the energy of a
semidefinite $A_\ell$ forces $I_iR_iv\in\ker A_\ell$ whenever
$v\in\ker A_\ell$.  For $v\in\N_{J_k}=\bigcap_{\ell\in J_k}\ker A_\ell$ this
gives $I_iR_iv\in\N_{J_k}$ for every $i$, that is, $Rv\in\W_{J_k}$; so $R$
satisfies \cref{eq:filtered-splitting}.  Summing
$R^\top\mathcal D_\ell R\preceq C_\ell A_\ell$ over $\ell\in J_k$ gives
$R^\top\mathcal D_{J_k}R\preceq(\max_{\ell\in J_k}C_\ell)A_{J_k}$, so
$c_{\pi,k}\le\max_\ell C_\ell$ for every $\pi$ and $k$.
\end{proof}

\subsection{When one common splitting exists}

Let $\mathcal L$ be the lattice of subspaces of $V$ generated by
$\{\N_J:\varnothing\ne J\subseteq[m]\}$ together with $\{0\}$ and $V$, under
sum and intersection.  Recall that $\mathcal L$ is \emph{distributive} when
$X\cap(Y+Z)=(X\cap Y)+(X\cap Z)$ for all $X,Y,Z\in\mathcal L$.
Distributivity provides a single basis of $V$ adapted simultaneously to every
member of $\mathcal L$, which permits a common splitting to be defined kernel
by kernel.  Moreover, $\mathcal L$ is then finite: it has finitely
many generators, and distributivity reduces every lattice expression to a join
of meets of subsets of them~\cite{gratzer2011lattice}.  The following
finite-lattice result therefore applies.

\begin{lemma}[Adapted basis]\label{lem:adapted}
Let $\mathcal L$ be a finite distributive lattice of subspaces of $V$
containing $\{0\}$ and $V$.  Then $V$ has a basis $B$ such that every
$U\in\mathcal L$ is spanned by
$B\cap U$~\cite[Lem.~4.4.2]{loday2012algebraic}; see also
\cite{harrison2014reflexivity}.
\end{lemma}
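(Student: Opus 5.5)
I would prove \cref{lem:adapted} by induction on the length of a maximal chain in $\mathcal L$, building $B$ one covering step at a time. Choose a maximal chain $\{0\}=U_0\subsetneq U_1\subsetneq\cdots\subsetneq U_N=V$ in $\mathcal L$; it is finite because $\mathcal L$ is finite. For each $k$ choose a complement $X_k$ with $U_k=U_{k-1}\oplus X_k$, and let $B$ be the union of bases of $X_1,\dots,X_N$. The claim to establish is that for every $U\in\mathcal L$ and every $k$, either $X_k\subseteq U$ or, after adjusting the choice of $X_k$, $U\cap U_k\subseteq U_{k-1}$. This choice of the complements is where distributivity has to be used, and I expect it to be the main obstacle.

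The key structural fact I would use is the standard one for finite distributive lattices: each covering pair $U_{k-1}\prec U_k$ in a maximal chain corresponds to a unique join-irreducible element $j_k\in\mathcal L$. Namely, $j_k$ is the least element of $\mathcal L$ with $j_k\le U_k$ and $j_k\not\le U_{k-1}$. Also, $k\mapsto j_k$ is a bijection onto the join-irreducibles, so every $U\in\mathcal L$ equals $\sum_{j_k\le U}j_k$ (Birkhoff). Let $j_k^-$ be the unique lower cover of $j_k$; it equals $j_k\cap U_{k-1}$ by distributivity. I would then take $X_k$ to be a complement of $j_k^-$ inside $j_k$. We have $j_k+U_{k-1}=U_k$ because $U_{k-1}\prec U_k$, and $j_k\cap U_{k-1}=j_k^-$. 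Hence $X_k$ is also a complement of $U_{k-1}$ in $U_k$, so $B$ is a basis of $V$.

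It remains to show that $B\cap U$ spans $U$ for each $U\in\mathcal L$. If $j_k\le U$ then $X_k\subseteq j_k\subseteq U$, so all basis vectors from such $X_k$ lie in $U$. Their span contains every $j_k\le U$, by induction on $k$: $j_k=X_k+j_k^-$, and $j_k^-$ is a join of join-irreducibles $j_\ell$ with $\ell<k$ and $j_\ell\le U$. Since $U=\sum_{j_k\le U}j_k$, these vectors span $U$. Conversely, a basis vector from $X_k$ with $j_k\not\le U$ cannot lie in $U$. The span of $B\cap U$ already equals $U$ by the previous step, and linear independence of $B$ forbids further vectors of $B$ inside that span. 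Dimension counting via $\dim U=\sum_{j_k\le U}\dim X_k$ confirms the bookkeeping.

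The delicate steps are the two distributivity facts: the covering-to-join-irreducible correspondence, and $j_k\cap U_{k-1}=j_k^-$. Both are the classical content behind the cited \cite[Lem.~4.4.2]{loday2012algebraic}. I would either cite them or verify them directly from the identity $X\cap(Y+Z)=(X\cap Y)+(X\cap Z)$. Without distributivity these facts fail, for example for three distinct lines in $\R^2$, which matches \cref{ex:ordering-dependent}.
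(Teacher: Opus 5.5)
Your proof is correct. The paper gives no argument of its own: it only cites Loday--Vallette and Harrison et al. So your construction is a self-contained replacement for the citation. You walk a maximal chain in $\mathcal L$, match each covering step to a join-irreducible $j_k$, and take $X_k$ as a complement of the lower cover $j_k^-$ inside $j_k$. The citation keeps the paper short. Your route makes $B$ explicit and shows exactly where distributivity enters. It is also precisely what \cref{sec:mac-several} does numerically. For two penalties the join-irreducibles are $\N_{\{1,2\}},\N_{\{1\}},\N_{\{2\}},V$, with lower covers $\{0\},\N_{\{1,2\}},\N_{\{1,2\}},\N_{\{1\}}+\N_{\{2\}}$. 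The blocks $U_{\{1,2\}},E_1,E_2,E_0$ of \cref{eq:two-penalty-splitting}, of dimensions $49,31,31,1$, are exactly these complements.

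Two points in your write-up should be tightened.

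First, the opening framing does not match what you do. You mention ``induction on the length of a maximal chain'' and ``after adjusting the choice of $X_k$'', but the construction is direct. The dichotomy ``$j_k\subseteq U$ or $U\cap U_k\subseteq U_{k-1}$'' holds for every choice of $X_k$, by minimality of $j_k$.

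Second, you misplace distributivity. The identity $j_k\cap U_{k-1}=j_k^-$ needs only minimality: $j_k^-\subseteq U_k$ and $j_k^-\subsetneq j_k$ force $j_k^-\subseteq U_{k-1}$. Distributivity is used elsewhere, in two places.
\begin{itemize}
\item It makes $j_k$ exist. The set $\{x\in\mathcal L:x\subseteq U_k,\ x\not\subseteq U_{k-1}\}$ is closed under $\cap$, because $U_{k-1}+(x\cap y)=(U_{k-1}+x)\cap(U_{k-1}+y)=U_k$.
\item It forces every join-irreducible $j$ to equal some $j_k$. For the first $k$ with $j\subseteq U_k$, distributivity gives $j=(j\cap U_{k-1})+(j\cap j_k)$. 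Join-irreducibility then gives $j\subseteq j_k$, and minimality gives $j=j_k$.
\end{itemize}
Your spanning step uses this surjectivity twice: in $U=\sum_{j_k\subseteq U}j_k$, and in writing $j_k^-$ as a sum of earlier $j_\ell$. So it should be proved rather than only asserted.
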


\begin{theorem}[Common splitting for a distributive kernel lattice]
\label{thm:distributive}
Assume \cref{eq:all-subset-condition} and that $\mathcal L$ is distributive.
Then there is a decomposition operator $R$ with
\begin{equation}
 R\,\N_J\subseteq\W_J
 \qquad\text{for every nonempty }J\subseteq[m].
 \label{eq:common-splitting}
\end{equation}
Consequently \cref{eq:componentstable} holds with finite constants $C_\ell$,
and \cref{thm:uniform} bounds the condition number by $Cq$ over the entire
parameter orthant.
\end{theorem}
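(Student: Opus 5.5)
The plan is to use the adapted basis of \cref{lem:adapted} to define $R$ one basis vector at a time. Each basis vector is lifted into the smallest face that contains it. Finite constants then follow from \cref{lem:domination}.

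First, since $\mathcal L$ is distributive and finitely generated, it is finite, so \cref{lem:adapted} gives a basis $B$ of $V$ with $U=\operatorname{span}(B\cap U)$ for every $U\in\mathcal L$. For each $b\in B$, set $J(b)=\{j\in[m]: b\in\ker K_j\}$. If $J(b)=\varnothing$, lift $b$ by any $w_b\in\mathfrak D(b)$, which exists because the subspaces span $V$. Otherwise $b\in\N_{J(b)}$, and by \cref{eq:all-subset-condition} we have $\N_{J(b)}=\N_{J(b)}^{\mathrm{loc}}=\mathcal I(\W_{J(b)})$. We may therefore choose $w_b\in\W_{J(b)}$ with $\mathcal Iw_b=b$, and we define $R$ linearly by $Rb=w_b$. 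Then $\mathcal IR=I_V$.

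Second, we verify \cref{eq:common-splitting}. Fix a nonempty $J$. Because $\N_J\in\mathcal L$, it is spanned by $B\cap\N_J$. For $b\in B\cap\N_J$ we have $J\subseteq J(b)$, hence $\N_{J(b)}\subseteq\N_J$ and $\W_{J(b)}\subseteq\W_J$, so $Rb\in\W_J$. Linearity then gives $R\N_J\subseteq\W_J$. This is the only place distributivity enters. It replaces the flag construction of \cref{lem:filtered}, which handles only one chain at a time, and that chain limitation is exactly what defeats \cref{ex:ordering-dependent}, whose lattice is not distributive.

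Third, we derive the constants. Apply \cref{eq:common-splitting} with $J=\{\ell\}$, so $R\ker A_\ell\subseteq\W_{\{\ell\}}=\ker\mathcal D_\ell$. This gives $\ker A_\ell\subseteq\ker(R^\top\mathcal D_\ell R)$. \Cref{lem:domination} then yields a finite $C_\ell$ with $R^\top\mathcal D_\ell R\preceq C_\ell A_\ell$, which is \cref{eq:componentstable}; for $\ell=0$ this uses $A_0\succ0$. Since $q_\ell\le s$ always holds, \cref{thm:uniform} gives the bound $Cq$ on the whole orthant.

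The main obstacle is really only the appeal to \cref{lem:adapted}, together with the remark that the lattice is finite. A smaller point to check is that the lift of each basis vector must lie in the intersection of all faces containing that vector. Choosing the face $\N_{J(b)}$, which is the smallest joint kernel containing $b$, handles every $J$ simultaneously.
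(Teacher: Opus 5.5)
Your proof is correct and follows the paper's argument essentially step for step. Your index set $J(b)=\{j\in[m]: b\in\ker K_j\}$ coincides with the paper's $J^\ast(b)$, defined as the union of all nonempty $J$ with $b\in\N_J$; the lift of each adapted-basis vector into $\W_{J(b)}$, the verification of \cref{eq:common-splitting} through \cref{lem:adapted}, and the use of \cref{lem:domination} to obtain finite $C_\ell$ are exactly the steps the paper takes.
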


\begin{proof}
Let $B$ be a basis adapted to $\mathcal L$ as in \cref{lem:adapted}.  For
$b\in B$ set $J^\ast(b)=\bigcup\{J:\varnothing\ne J\subseteq[m],\
b\in\N_J\}$, with $J^\ast(b)=\varnothing$ if no such $J$ exists.  Because
$\N_{J\cup J'}=\N_J\cap\N_{J'}$, we have $b\in\N_{J^\ast(b)}$ whenever
$J^\ast(b)\ne\varnothing$.  By \cref{eq:all-subset-condition} the map
$\mathcal I:\W_{J^\ast(b)}\to\N_{J^\ast(b)}$ is onto, so we may choose
$Rb\in\W_{J^\ast(b)}$ with $\mathcal IRb=b$; for $J^\ast(b)=\varnothing$
choose any $Rb$ with $\mathcal IRb=b$.  Extending linearly defines $R$ with
$\mathcal IR=I_V$.

Let $J$ be nonempty and $v\in\N_J$.  Since $B$ is adapted and
$\N_J\in\mathcal L$, we may write $v=\sum c_bb$ over those $b\in B$ that lie
in $\N_J$.  For each such $b$ we have $J\subseteq J^\ast(b)$, hence
$\N_{J^\ast(b)}\subseteq\N_J$ and therefore
$\W_{J^\ast(b)}\subseteq\W_J$, so $Rb\in\W_J$.  Thus $Rv\in\W_J$, which is
\cref{eq:common-splitting}.

Finally, \cref{eq:common-splitting} for $J=\{\ell\}$ gives
$\ker A_\ell\subseteq\ker(R^\top\mathcal D_\ell R)$ for $\ell\ge1$, and
$A_0\succ0$ handles $\ell=0$; each $C_\ell$ is therefore finite by
\cref{lem:domination}.
\end{proof}

Two frequently occurring situations are covered automatically.

\begin{corollary}\label{cor:two}
If $m=2$, then \cref{eq:all-subset-condition} implies the existence of a
common splitting \cref{eq:common-splitting}.
\end{corollary}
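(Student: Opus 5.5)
The plan is to reduce \cref{cor:two} to \cref{thm:distributive}.  That theorem already produces a common splitting \cref{eq:common-splitting} from \cref{eq:all-subset-condition}, provided the kernel lattice $\mathcal L$ is distributive.  So the only task is to show that for $m=2$ the lattice $\mathcal L$ generated by $\N_{\{1\}}$, $\N_{\{2\}}$, $\N_{\{1,2\}}=\N_{\{1\}}\cap\N_{\{2\}}$, $\{0\}$ and $V$ is always distributive.

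First, I will identify $\mathcal L$ explicitly.  Write $X=\N_{\{1\}}$ and $Y=\N_{\{2\}}$.  The sublattice of subspaces generated by $X$ and $Y$ together with $\{0\}$ and $V$ consists of at most the six elements
\[
\{0\},\quad X\cap Y,\quad X,\quad Y,\quad X+Y,\quad V,
\]
because this set is closed under sum and intersection.  For example, $X\cap(X+Y)=X$, $(X\cap Y)+X=X$, and $X\cap V=X$; the remaining cases are equally immediate.  So $\mathcal L$ is finite, and its elements are obtained from two generators.

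Second, I will verify distributivity.  The cleanest route is the classical fact that the free lattice generated by two elements is distributive: every lattice generated by two elements, here with the bounds $\{0\}$ and $V$ adjoined, is a homomorphic image of the free distributive structure on two generators, so no pentagon $N_5$ or diamond $M_3$ can occur.  Alternatively, and more self-contained, I can avoid lattice theory altogether by building an adapted basis directly, which is all that the proof of \cref{thm:distributive} actually uses from \cref{lem:adapted}.  Choose a basis of $X\cap Y$, extend it by vectors to a basis of $X$ and separately to a basis of $Y$, and then extend the union to a basis of $V$.  The union of the two extensions is a basis of $X+Y$, since $\dim(X+Y)=\dim X+\dim Y-\dim(X\cap Y)$.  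Each of the six members above is then spanned by the basis vectors it contains.

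Third, I will feed this basis into the argument of \cref{thm:distributive}.  Given an adapted basis and \cref{eq:all-subset-condition}, the kernel-by-kernel lift yields $R$ with $R\,\N_J\subseteq\W_J$ for $J=\{1\},\{2\},\{1,2\}$.

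The only step that needs care is checking that $X+Y$ is spanned by its intersection with the basis, and this follows from the dimension formula.  Distributivity fails for three subspaces precisely because three lines in a plane form $M_3$, as in \cref{ex:ordering-dependent}; with two generators this obstruction cannot arise, so I expect no real obstacle.
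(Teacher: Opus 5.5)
Your proposal is correct. Your first justification of distributivity is the paper's argument in different language. The paper notes that the sublattice generated by $\N_{\{1\}}$ and $\N_{\{2\}}$ is $\{X\cap Y,X,Y,X+Y\}$, which is either a chain or the four-element Boolean lattice, and that adjoining $\{0\}$ and $V$ preserves distributivity. It then cites \cref{thm:distributive}, which obtains its basis from \cref{lem:adapted}. Your phrasing there is a little loose, since ``homomorphic image of the free distributive structure'' already presupposes what you want. The precise version is that the free bounded lattice on two generators is the six-element distributive lattice, and homomorphic images of distributive lattices are distributive.

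Your second route is genuinely different and more elementary. You extend a basis of $X\cap Y$ separately into $X$ and into $Y$, then complete to $V$. This bypasses both the lattice-theoretic check and the cited \cref{lem:adapted}, and it gives an explicit recipe for $R$. It is in fact the basis $[U_{\{1,2\}}\ E_1\ E_2\ E_0]$ that the paper itself builds numerically in \cref{eq:two-penalty-splitting}. What the paper's route buys is uniformity: the same mechanism covers chains, commuting projectors (\cref{cor:commuting}), and any distributive kernel lattice. Your hand construction relies on the special structure of two generators.

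One small remark: the proof of \cref{thm:distributive} only uses that each $\N_J$ is spanned by the basis vectors it contains. Adaptation to $X+Y$, the step you flag as needing care, is therefore not actually required, though your dimension-count argument for it is correct.
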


\begin{proof}
Here $\mathcal L$ is generated by $\N_{\{1\}}$ and $\N_{\{2\}}$, because
$\N_{\{1,2\}}=\N_{\{1\}}\cap\N_{\{2\}}$.  The sublattice generated by two
elements $X$ and $Y$ of any lattice is $\{X\cap Y,X,Y,X+Y\}$, which is a
chain when $X$ and $Y$ are comparable and otherwise a four-element Boolean
lattice; both are distributive.  Adjoining a bottom and a top element
preserves distributivity, since every instance of the distributive law in
which one argument is $\{0\}$ or $V$ reduces to an identity.  Apply
\cref{thm:distributive}.
\end{proof}

\begin{corollary}\label{cor:commuting}
If the projectors onto $\ker K_1,\ldots,\ker K_m$ that are orthogonal in one
fixed inner product commute pairwise, or if these kernels form a chain, then
$\mathcal L$ is distributive and \cref{eq:common-splitting} holds under
\cref{eq:all-subset-condition}.
\end{corollary}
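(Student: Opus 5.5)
Both hypotheses will be reduced to distributivity of $\mathcal L$, after which \cref{thm:distributive} gives \cref{eq:common-splitting}. The chain case is immediate. If $\ker K_1,\ldots,\ker K_m$ form a chain, then every intersection $\N_J$ equals the smallest kernel in $J$. Hence $\mathcal K$ is a subset of the chain, and sums of chain elements are again chain elements. Adjoining $\{0\}$ and $V$ keeps $\mathcal L$ a finite chain. A chain is distributive, since for comparable elements both sides of $X\cap(Y+Z)=(X\cap Y)+(X\cap Z)$ reduce to the same element by a short case check on the orderings of $X,Y,Z$. This is the same reduction as in \cref{cor:two}.

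For the commuting case, let $\Pi_j$ denote the orthogonal projector onto $\ker K_j$ in the fixed inner product, and assume $\Pi_j\Pi_k=\Pi_k\Pi_j$. Commuting orthogonal projectors are simultaneously diagonalizable. I therefore plan to choose an orthonormal basis $B$ of common eigenvectors, each $\Pi_j$ acting on every $b\in B$ by $0$ or $1$. Each $\ker K_j=\range\Pi_j$ is then spanned by $B\cap\ker K_j$. Next I consider the family $\mathcal S$ of all subspaces spanned by subsets of $B$. The map $S\mapsto\operatorname{span}S$ carries unions to sums and intersections to intersections, because $B$ is linearly independent. I will check intersections carefully: a vector in $\operatorname{span}S\cap\operatorname{span}T$ has a unique expansion in $B$, so its support lies in $S\cap T$. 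Thus $\mathcal S$ is a sublattice of the subspace lattice isomorphic to the Boolean lattice $2^B$, which is distributive. It contains $\{0\}$, $V$ and every $\ker K_j$, so it contains the lattice $\mathcal L$ they generate. As a sublattice of a distributive lattice, $\mathcal L$ is distributive.

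With distributivity established in both cases, and since $\mathcal L$ is finite as noted before \cref{lem:adapted}, \cref{thm:distributive} applies under \cref{eq:all-subset-condition} and yields \cref{eq:common-splitting}. The only step needing care is the closure of coordinate subspaces under intersection, and the uniqueness-of-coefficients argument above settles it. In fact the commuting case could bypass the citation of \cref{lem:adapted}, because $B$ is itself an adapted basis.
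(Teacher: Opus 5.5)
Your proposal is correct and follows the paper's own argument. In the commuting case, simultaneous orthogonal diagonalization makes every member of $\mathcal L$ a coordinate subspace of one orthonormal basis, so $\mathcal L$ embeds in a Boolean lattice; in the chain case, $\mathcal L$ is itself a chain; and \cref{thm:distributive} then gives \cref{eq:common-splitting}. Your added details (uniqueness of coefficients for the intersection step, and the observation that $B$ is already an adapted basis) fill in steps the paper compresses into a single line.
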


\begin{proof}
Commuting orthogonal projectors are simultaneously diagonalizable, so each
$\ker K_j$ is spanned by a subset of one orthonormal basis.  Sums and
intersections of such coordinate subspaces are again coordinate subspaces,
and correspond to unions and intersections of index sets, so $\mathcal L$
embeds into the Boolean lattice of subsets, which is distributive.  A chain is
distributive.  Apply \cref{thm:distributive}.
\end{proof}

\begin{remark}\label{rem:m3}
The all-subset condition need not yield a common splitting.  In
\cref{ex:ordering-dependent} the three singleton kernels are distinct lines
in $\R^2$.  Together with $\{0\}$ and $V$, they form the five-element lattice
$M_3$.  Their pairwise intersections are zero and their pairwise sums equal
$V$, so distributivity fails because
\begin{equation}
 \N_{\{1\}}\cap(\N_{\{2\}}+\N_{\{3\}})=\N_{\{1\}},
 \qquad
 (\N_{\{1\}}\cap\N_{\{2\}})+(\N_{\{1\}}\cap\N_{\{3\}})=\{0\}.
 \label{eq:m3-failure}
\end{equation}
That example has no common splitting, although \cref{cor:cone} still supplies
a finite orthant bound.  By \cref{cor:two}, a nondistributive kernel lattice
cannot occur with fewer than three penalties; \cref{sec:fe-three} reports a
numerical Scott--Vogelius instance with three regional weights.
\end{remark}

\Needspace{6\baselineskip}
\section{Finite tests and bounds}\label{sec:construction}

\Cref{alg:checks} summarizes the finite-dimensional kernel test and bound
construction.  In exact arithmetic, a positive deficiency identifies a ray
on which \cref{thm:necessary} proves deterioration at the rate $\gamma_J$; if
no deficiency occurs, \cref{thm:characterization} gives fixed-dimensional
robustness.  Mesh independence additionally requires the constants to remain
uniform under refinement, which rank agreement on individual meshes does not
imply.

The resulting design rule differs from the familiar ones.  Classical coarse
spaces approximate low-energy error~\cite{toselli2005domain} and spectral
enrichment selects local modes through generalized
eigenproblems~\cite{spillane2014geneo}, whereas a space built for a dominant
semidefinite penalty must also represent that penalty's
kernel~\cite{farrell2021reynolds}.  The kernel comparison in
\cref{alg:checks} evaluates this property once per distinct joint kernel, and
\cref{eq:finite-inequalities} supplies the associated common-splitting bound.

\paragraph{The one-penalty splitting}
For one constraint, the construction is explicit.  Put
$\N=\ker K$ and $\W_{\N}=\{(v_i)\in\W:KI_iv_i=0\ \text{for every }i\}$, let
$\Pi_{\N}:V\to\N$ be the $A_0$-orthogonal projector, and note that
$\N^{\mathrm{loc}}=\N$ makes $\mathcal I:\W_{\N}\to\N$ onto, with a linear
right inverse $E_{\N}$.  The $\mathcal D_0$-energy-minimizing right inverse of
$\mathcal I$ on $V$ is
\begin{equation}
 \begin{aligned}
 E_0&=\mathcal D_0^{-1}\mathcal I^\top
     (\mathcal I\mathcal D_0^{-1}\mathcal I^\top)^{-1},\\
 R&=E_{\N}\Pi_{\N}+E_0(I_V-\Pi_{\N}).
 \end{aligned}
 \label{eq:splitter}
\end{equation}
The inverse exists because $\mathcal D_0$ is SPD and $\mathcal I$ onto, and the
normal equations for the constrained minimum give the first formula; hence
$\mathcal IR=I_V$ and $R$ maps every global null vector to components in the
local kernels.  For each component the least constants are determined by
\begin{equation}
 \begin{aligned}
 R^\top\mathcal D_\ell R
   &\preceq C_\ell A_\ell,\\
 \mathcal I^\top A_\ell\mathcal I
   &\preceq q_\ell\mathcal D_\ell.
 \end{aligned}
 \label{eq:finite-inequalities}
\end{equation}
By \cref{lem:domination}, generalized eigenvalues determine whether
\cref{eq:finite-inequalities} holds and, when it does, give the least
constants.

\paragraph{The general algorithm}
For several penalties, \cref{alg:checks} summarizes the characterization and
the ordering-cone bound.  Here $\operatorname{null}(M)$ denotes any matrix
whose columns form a basis of $\ker M$.  Semidefinite generalized eigenvalues
are evaluated on the orthogonal complement of the denominator's kernel, with
value zero when the denominator vanishes.

\begin{algorithm}[tbhp]
\caption{Joint-kernel test and computable orthant bound.}\label{alg:checks}
\begin{algorithmic}[1]
\REQUIRE $A_0$, $K_1,\ldots,K_m$, injective subspace maps
$I_1,\ldots,I_s$ whose ranges span $V$
\STATE Form $\mathcal I=[I_1\ \cdots\ I_s]$.
\STATE Form $A_\ell=K_\ell^\top K_\ell$ and
$\mathcal D_\ell=\operatorname{diag}_i(I_i^\top A_\ell I_i)$ for
$\ell\in[m]$, together with $A_0$ and $\mathcal D_0$.
\FOR{every nonempty $J\subseteq[m]$}
  \STATE Stack the matrices $K_j$, $j\in J$, to form $K_J$, and set
  $U_J=\operatorname{null}(K_J)$.
  \STATE Set $Z_{i,J}=I_i\operatorname{null}(K_JI_i)$ and
  $Z_J=[Z_{1,J}\ \cdots\ Z_{s,J}]$.
  \STATE Set $d_J=\dim\ker K_J-\operatorname{rank}(Z_J)$.
\ENDFOR
\IF{$d_J>0$ for some $J$}
  \RETURN the deficient sets $J$ and their deficiencies $d_J$.
\ENDIF
\FOR{$\ell=0,\ldots,m$}
  \STATE Compute the least $q_\ell$ satisfying
  $\mathcal I^\top A_\ell\mathcal I\preceq q_\ell\mathcal D_\ell$.
\ENDFOR
\FOR{every permutation $\pi$ of $[m]$}
  \STATE Set $J_k=\{\pi_1,\ldots,\pi_k\}$ and construct $R_\pi$ by the
  complement-and-lift procedure in \cref{lem:filtered}.
  \STATE Compute the least $c_{\pi,0},\ldots,c_{\pi,m}$ in
  \cref{eq:cone-constants}.
\ENDFOR
\RETURN $q=\max_\ell q_\ell$ and the orthant bound in
\cref{eq:cone-bound}.
\end{algorithmic}
\end{algorithm}

In exact arithmetic a positive $d_J$ proves linear deterioration along the ray
for $J$, whose rate follows from $G_J$ and \cref{eq:rate-constant} once the
remaining parameters are fixed.  In floating point the rank and kernel
comparisons require a stated tolerance and are not exact decisions.  Repeated
joint kernels need only one rank test by \cref{prop:distinct}, but without
further structure the worst-case count remains exponential
(\cref{thm:irredundant}), and the ordering-cone stage is factorial.  When
\cref{thm:distributive} applies, one common splitting and
\cref{eq:finite-inequalities} replace that stage.

\section{Computational study}\label{sec:study}

The computations serve three distinct purposes: they evaluate the algebraic
identities and implementations on finite-dimensional examples, compare the
asymptotic coefficient $\gamma_J$ with computed spectra, and examine the same
kernel mechanism in a finite-element W-cycle.  These calculations illustrate
the theoretical results but do not prove them.

On the staggered-grid model of \cref{sec:mac}, every constant can be formed
explicitly and the exact-additive theorems apply.  In \cref{sec:fe-exact}, the
preconditioner remains exactly additive, while the subspaces arise from a
Scott--Vogelius discretization.  The inexact W-cycle of \cref{sec:fe-cycle}
lies outside the theorem hypotheses and is therefore reported as separate
empirical evidence.  \Cref{tab:plan} summarizes the role of each experiment.

\begin{table}[tbhp]
\caption{Numerical role of each experiment.  The first two columns evaluate
the exact-additive theory on selected finite-dimensional problems; the
W-cycle provides evidence outside the theorem hypotheses.}
\label{tab:plan}
\centering
\small
\begin{tabular}{@{}p{.26\linewidth}p{.18\linewidth}p{.23\linewidth}
                  p{.17\linewidth}@{}}
\toprule
result or mechanism & staggered grid & exact additive, SV & inexact W-cycle\\
\midrule
kernel obstruction and rate
  & \cref{sec:mac-rate} & \cref{sec:fe-negative}; \cref{sec:fe-three} & n/a\\
subset characterization
  & \cref{sec:mac-several} & \cref{sec:regional-penalties} & n/a\\
bounds and lattice
  & \cref{sec:mac-several} & \cref{sec:fe-three} & n/a\\
kernel-preserving solver behavior
  & n/a & n/a & \cref{sec:fe-cycle}\\
\bottomrule
\end{tabular}
\end{table}

\subsection{A staggered-grid model}\label{sec:mac}

Every constant of \crefrange{sec:upper}{sec:quantitative} can be formed
explicitly on this model and evaluated by dense linear algebra.  Let $h=1/n$
and partition $\Omega=(0,1)^2$ into $n^2$ square cells using the
marker-and-cell (MAC) staggered grid~\cite{harlow1965mac}.  The velocity space
$V_h\cong\R^{N_h}$, $N_h=2n(n-1)$, holds the values
$u_{i,j}$ ($1\le i\le n-1$, $1\le j\le n$) and $v_{i,j}$
($1\le i\le n$, $1\le j\le n-1$) on vertical and horizontal interior faces
respectively, with values outside these index sets zero.

For $w=(u,v)\in V_h$, define the scaled cellwise divergence
$K_h=h\,\operatorname{div}_h\in\R^{n^2\times N_h}$ by
\begin{equation}
 (K_hw)_{i,j}=u_{i,j}-u_{i-1,j}+v_{i,j}-v_{i,j-1},
 \qquad 1\le i,j\le n.
 \label{eq:mac-divergence}
\end{equation}
Hence $\norm{K_hw}_2^2$ is the midpoint quadrature of the squared pointwise
discrete divergence, and the base matrix is the componentwise five-point graph
Dirichlet operator,
\begin{align}
 (A_0w)^u_{i,j}
 &=4u_{i,j}-u_{i-1,j}-u_{i+1,j}-u_{i,j-1}-u_{i,j+1}, \notag\\
 (A_0w)^v_{i,j}
 &=4v_{i,j}-v_{i-1,j}-v_{i+1,j}-v_{i,j-1}-v_{i,j+1}.
 \label{eq:mac-base}
\end{align}
The model operator is $A_h(\tau)=A_0+\tau K_h^\top K_h$ for $\tau\ge0$.

\paragraph{The discrete exact sequence}
Let $\Psi_h\cong\R^{(n-1)^2}$ contain scalar values at the interior grid
vertices, extended by zero to the boundary.  Define $C_h:\Psi_h\to V_h$ by
\begin{equation}
 \begin{aligned}
 (C_h\psi)^u_{i,j}&=\frac{\psi_{i,j}-\psi_{i,j-1}}{h},
   &&1\le i\le n-1,\quad 1\le j\le n,\\
 (C_h\psi)^v_{i,j}&=\frac{\psi_{i-1,j}-\psi_{i,j}}{h},
   &&1\le i\le n,\quad 1\le j\le n-1.
 \end{aligned}
 \label{eq:mac-curl}
\end{equation}
If $Q_h^0=\{q\in\R^{n^2}:\boldsymbol 1^\top q=0\}$, then
\begin{equation}
 0\longrightarrow\Psi_h\xrightarrow{\,C_h\,}V_h
 \xrightarrow{\,K_h\,}Q_h^0\longrightarrow0
 \label{eq:exactsequence}
\end{equation}
is exact: substitution gives $K_hC_h=0$, while $C_h\psi=0$ forces adjacent
vertex values to agree and hence $\psi=0$ by the zero boundary extension, and
$K_h^\top q=0$ forces $q$ constant.  So $C_h$ has rank $(n-1)^2$,
$\operatorname{rank}K_h=n^2-1$, and $\ker K_h=\range C_h$ has dimension
$(n-1)^2$.  We retain all $n^2$ rows of $K_h$; their one linear dependence
expresses $\range K_h=Q_h^0$.

\paragraph{Vertex patches}
For an interior vertex $(i,j)$, let
\begin{equation}
 V_{i,j}=\operatorname{span}
 \{e^u_{i,j},e^u_{i,j+1},e^v_{i,j},e^v_{i+1,j}\}.
\end{equation}
The restriction of $K_h$ to $V_{i,j}$ has rank three, and its null vector is
$C_he^\psi_{i,j}$.  Therefore
\begin{equation}
 \begin{aligned}
 V_{i,j}\cap\ker K_h
   &=\operatorname{span}\{C_he^\psi_{i,j}\},\\
 \ker K_h
   &=\sum_{i=1}^{n-1}\sum_{j=1}^{n-1}
     (V_{i,j}\cap\ker K_h).
 \end{aligned}
 \label{eq:patch-kernel}
\end{equation}
Every one-coordinate velocity subspace, by contrast, has trivial intersection
with $\ker K_h$.

\paragraph{Coarse spaces}
Suppose that $n$ is even and set $H=2h$.  Let
\begin{equation}
 J_\psi\in\R^{(n-1)^2\times(n/2-1)^2},\qquad
 J_q\in\R^{n^2\times(n/2)^2}
\end{equation}
denote, respectively, the bilinear interpolation matrix for zero-boundary
coarse vertex values and the tensor-product piecewise-linear interpolation
matrix for coarse cell-centered values with endpoint clamping.  With
$\operatorname{orth}(X)$ denoting a
matrix whose orthonormal columns span $\range X$, define
\begin{equation}
 \begin{aligned}
 Z_\psi&=\operatorname{orth}(C_hJ_\psi),&
 Z_q&=\operatorname{orth}(K_h^\top J_q),\\
 Z_c&=\operatorname{orth}[Z_\psi\ Z_q].
 \end{aligned}
 \label{eq:coarse}
\end{equation}
Thus $K_hZ_\psi=0$ and $\range Z_q\subseteq\range K_h^\top$, and the coarse
space is $V_c=\range Z_c$.  Each calculation is two-level, building $V_c$ from
the associated $2h$ grid and solving the coarse problem exactly.

\paragraph{Numerical setup}
The dense calculations use NumPy 2.5.2 and SciPy
1.18.1~\cite{harris2020numpy,virtanen2020scipy}.  Condition numbers are
computed from $P^{-1/2}A_hP^{-1/2}$, which is similar to $P^{-1}A_h$.  The
dimension and rank identities, reconstruction properties, and kernel
inclusions are evaluated at the reported tolerances.  For the preconditioned
conjugate-gradient (PCG) solves, true residuals are recomputed and the random
right-hand sides use seed 20270829.

\subsubsection{One penalty: the mechanism and its rate}
\label{sec:mac-rate}

For $n=12$ (264 unknowns), we evaluate the inverse penalty
$\rho=\tau^{-1}$ from $1$ to $10^{-10}$; \cref{fig:penalty} plots the
resulting condition numbers.  Write $A_\rho=A_h(\rho^{-1})$ and
$P_\rho^{-1}=P(\rho^{-1})^{-1}$.  Point subspaces fail to decompose
$\ker K_h$; accordingly, \cref{thm:necessary} implies unbounded growth, and
the computed condition number rises from $82.62$ to $2.56\times10^{11}$.
Vertex patches satisfy \cref{eq:patch-kernel}, but without a coarse space the
condition number still reaches $1728.03$ at the smallest $\rho$.  Adding the
coarse space keeps the computed condition numbers between $7.47$ and $18.47$
over the full sweep.

\begin{figure}[tbhp]
\centering
\begin{tikzpicture}[scale=.98,transform shape]
\begin{loglogaxis}[
 width=.96\linewidth,height=3.9cm,
 xlabel={inverse penalty $\rho=\tau^{-1}$},
 ylabel={$\kappa(P_\rho^{-1}A_\rho)$},
 xmin=1e-10,xmax=1,ymin=1,ymax=1e12,enlarge x limits=false,
 x dir=reverse,grid=both,minor grid style={gray!15},major grid style={gray!30},
 legend style={at={(axis description cs:.5,1.025)},anchor=south,draw=none,font=\scriptsize,
               legend columns=3,column sep=5pt},
 mark options={scale=.62},
]
\addplot[black,very thick,dashed,mark=square,
         mark options={solid,fill=white,scale=.75}]
 table[x=epsilon,y=point]{\PenaltySweepData};
\addlegendentry{point subspaces}
\addplot[black,very thick,dashdotted,mark=star,
         mark options={solid,fill=black,scale=1.05}]
 table[x=epsilon,y=vertex]{\PenaltySweepData};
\addlegendentry{vertex patches}
\addplot[black,very thick,mark=*] table[x=epsilon,y=cone]{\PenaltySweepData};
\addlegendentry{vertex patches $+$ coarse space}
\end{loglogaxis}
\end{tikzpicture}
\caption{Fixed-grid parameter sweep at $n=12$.  Point corrections violate the
kernel condition, vertex patches satisfy it without controlling coarse modes,
and the combined space adds the coarse basis of \cref{eq:coarse}.}
\label{fig:penalty}
\end{figure}

The two families in \cref{eq:coarse} address different regimes, and on seven
grids through $n=16$ neither is adequate alone.  At $n=16$ the range family
$Z_q$ improves the base regime from $82.44$ to $52.14$ at $\rho=1$ but leaves
$\rho=10^{-8}$ near $5.1\times10^3$, whereas the kernel family $Z_\psi$ reduces
that stiff value to $111.40$.  The combined space keeps both regimes below
$20$ on every grid.  This behavior is consistent with the requirement in
\cref{thm:uniform} that one decomposition be stable simultaneously in the
base and penalty energies.

\Cref{thm:rate} predicts the leading decay of the smallest eigenvalue.  For the
one-penalty family $A_h(\tau)$, the only nonempty subset is $J=\{1\}$, with
$\bar A=A_0$ and $\N_J=\ker K_h$.  We form
the matrix $G_J$ representing \cref{eq:kernel-obstruction}, compute
$\gamma_J$ from \cref{eq:rate-constant}, and compare
$\gamma_J^{-1}$ with $\tau\lambda_{\min}(P(\tau)^{-1}A_h(\tau))$ at
$\tau=10^{10}$ for both space families in \cref{tab:sharp-rate}.

\begin{table}[tbhp]
\caption{Asymptotic smallest-eigenvalue rate at $\tau=10^{10}$ on three of
the five computed grids.  Point subspaces have
$\N_J^{\mathrm{loc}}=\{0\}$, so $\gamma_J>0$ and \cref{thm:rate} predicts
$\tau\lambda_{\min}\to\gamma_J^{-1}$; vertex patches with the coarse space
satisfy the kernel condition, so $\gamma_J$ vanishes and $\lambda_{\min}$ stays
bounded away from zero.}
\label{tab:sharp-rate}
\centering
\small
\begin{tabular}{rrrrrrr}
\toprule
& & & \multicolumn{3}{c}{point subspaces} & vertex $+$ coarse\\
\cmidrule(lr){4-6}\cmidrule(lr){7-7}
$n$ & unknowns & $\dim\N_J$ & $\gamma_J$ & $\gamma_J^{-1}$
 & $\tau\lambda_{\min}$ & $\lambda_{\min}$\\
\midrule
4 & 24 & 9 & 1.0111 & 0.98904 & 0.98905 & 0.40876\\
8 & 112 & 49 & 3.1505 & 0.31741 & 0.31741 & 0.24202\\
12 & 264 & 121 & 6.5123 & 0.15355 & 0.15356 & 0.21656\\
\bottomrule
\end{tabular}
\end{table}

Predicted and observed values agree to at least four significant digits on
every grid, with $\dim\N_J^{\mathrm{loc}}=0$ throughout for the point
subspaces.  For the vertex patches with the coarse space the computed
$\gamma_J$ is at most $3.4\times10^{-15}$, which is consistent with zero at
floating-point precision, and $\lambda_{\min}$ remains above $0.21$.  The two
cases of \cref{thm:rate} are therefore also separated in the computed spectra.

\subsubsection{One penalty: computed bounds under refinement}

\Cref{fig:scale} extends the two-level calculation to $n=40$ (3,120 unknowns).
At $\tau=10^8$, the condition number changes from $19.17$ to $20.14$ between
$n=16$ and $40$, corresponding to a fitted log--log slope of $0.053$.  PCG
takes 52 iterations to reach a recomputed relative residual below
$2\times10^{-7}$ for $n=24,28,32,36,40$, and the condition number at $\tau=1$
stays below $9.78$.
For $n\le16$ we construct $R$ from \cref{eq:splitter} and compute its
component constants: at $n=16$,
$(C_0,C_1,q_0,q_1)=(4.865,1.344,3.242,4.000)$ gives $Cq\approx19.459$ against
an observed $19.166$ at $\tau=10^8$, with the individual constants in
\cref{tab:component-bounds}.

\begin{table}[tbhp]
\caption{Computed component-energy constants and the observed condition number
at $\tau=10^8$ on four of the seven computed grids.  Reconstruction and
kernel-preservation residuals are below $1.5\times10^{-14}$.}
\label{tab:component-bounds}
\centering
\small
\begin{tabular}{rrrrrrr}
\toprule
$n$ & $C_0$ & $C_1$ & $q_0$ & $q_1$ & $Cq$ & observed\\
\midrule
4  & 2.449 & 1.008 & 2.993 & 3.865 & 9.465 & 9.455\\
8  & 4.181 & 1.083 & 3.177 & 3.991 & 16.688 & 16.491\\
12 & 4.676 & 1.229 & 3.224 & 3.998 & 18.697 & 18.463\\
16 & 4.865 & 1.344 & 3.242 & 4.000 & 19.459 & 19.166\\
\bottomrule
\end{tabular}
\end{table}

Over all seven grids the computed bound exceeds the observed value by at
most $1.4\%$, so the estimate of \cref{thm:uniform} is close to sharp here.
The estimate is not mesh-uniform: $C_0$ increases across the sampled grids,
consistent with the fixed-mesh scope of the theorem.

\begin{figure}[tbhp]
\centering
\begin{tikzpicture}
\begin{axis}[width=.72\linewidth,height=3.7cm,grid=major,
 xlabel={$n$},ylabel={condition number},xmin=3,xmax=41,ymin=0,ymax=23,
 legend style={at={(axis description cs:.5,1.03)},anchor=south,draw=none,
               font=\scriptsize,legend columns=3,column sep=4pt}]
\addplot[black,very thick,dashed,mark=square,
         mark options={solid,fill=white}]
 table[x=n,y=base_condition]{\ScalabilityData};
\addlegendentry{base ($\tau=1$)}
\addplot[black,very thick,mark=*] table[x=n,y=stiff_condition]{\ScalabilityData};
\addlegendentry{stiff ($\tau=10^8$)}
\addplot[black,very thick,dashdotted,mark=star,
         mark options={solid,fill=black,scale=1.05}]
 table[x=n,y=bound]{\OneParameterBoundData};
\addlegendentry{bound ($n\le16$)}
\end{axis}
\end{tikzpicture}
\caption{Mesh refinement for the staggered-grid problem.  Each calculation
uses an exact $2h$ coarse solve; the computed bound $Cq$ is shown for
$n\le16$.}
\label{fig:scale}
\end{figure}

For $n>16$, forming the dense $R$ is impractical, so \cref{fig:scale} reports
only the observed condition numbers.  They remain nearly constant through
$n=40$; the calculation supplies no asymptotic mesh bound.

\subsubsection{Several penalties: orthant bounds and the test family}
\label{sec:mac-several}

For even $n$, let $K_1$ contain the divergence rows for the first $n/2$ columns
of cells and let $K_2$ contain the rows for the remaining columns.  Let
$\Pi_j$ be the $A_0$-orthogonal projector onto
$\ker K_j$.  At $n=8$,
$\norm{\Pi_1\Pi_2-\Pi_2\Pi_1}_2=0.222$, so these two kernel projectors do not
commute.

The global and assembled local kernel dimensions agree for
$J=\varnothing,\{1\},\{2\},\{1,2\}$, with pairs $112/112$, $80/80$, $80/80$,
and $49/49$.  Over the sixteen combinations of
$\rho_j\in\{1,10^{-3},10^{-6},10^{-9}\}$, vertex patches alone reach $387.90$
while adding both coarse basis families reduces the largest observed value to
$32.17$.

To evaluate the sufficient condition beyond those sixteen points, let $U_J$
have orthonormal columns spanning $\N_J$ and set
\begin{equation}
 \begin{aligned}
 E_1&=\operatorname{orth}\bigl((I-U_{\{1,2\}}U_{\{1,2\}}^\top)U_{\{1\}}\bigr),\\
 E_2&=\operatorname{orth}\bigl((I-U_{\{1,2\}}U_{\{1,2\}}^\top)U_{\{2\}}\bigr),\\
 E_0&=\operatorname{null}\bigl([U_{\{1,2\}}\ E_1\ E_2]^\top\bigr).
 \end{aligned}
 \label{eq:two-penalty-splitting}
\end{equation}
Lifting $U_{\{1,2\}}$, $E_1$, and $E_2$ through the matching assembled local
kernels, and $E_0$ through the $\mathcal D_0$-minimal right inverse, defines
$R$ on the basis $[U_{\{1,2\}}\ E_1\ E_2\ E_0]$, which is adapted to the
lattice generated by $\N_{\{1\}}$ and $\N_{\{2\}}$.  By \cref{cor:two} the
three kernel equalities imply a common splitting exists, and direct enumeration
gives a six-member distributive lattice.  At $n=8$ the four block dimensions
are $49$, $31$, $31$, and $1$, and direct calculation gives
\begin{equation}
 (C_0,C_1,C_2)=(9.011,6.918,6.918),\qquad
 (q_0,q_1,q_2)=(3.177,4.824,4.824).
\end{equation}
The reconstruction residual is $9.2\times10^{-15}$ and the two
kernel-preservation residuals are below $7.6\times10^{-15}$, giving
\begin{equation}
 Cq\approx43.470.
 \label{eq:two-parameter-bound}
\end{equation}
Because the calculation has not been validated by interval arithmetic,
\cref{eq:two-parameter-bound} is a numerical estimate of the uniform bound
implied by the exact component inequalities; it is $1.351$ times the largest
sampled condition number, $32.167$.

For \cref{ex:ordering-dependent}, where \cref{rem:m3} shows no common splitting
exists, we build the six filtered right inverses of
\cref{eq:filtered-splitting}, one per ordering, and evaluate
\cref{eq:cone-bound}.  The overlap constant is $q=2$, the largest cone constant
is $\max_\pi\max_k c_{\pi,k}=1.9250$, and the resulting bound is $3.8499$.
Since the exact supremum is $2$ by \cref{eq:three-penalty-bound}, the computable
bound exceeds it by a factor of $1.925$ in a case where \cref{thm:uniform}
does not apply.  For a three-penalty problem in $\R^4$ with a
distributive kernel lattice, the adapted-basis construction of
\cref{thm:distributive} yields a common splitting with reconstruction residual
$3.3\times10^{-16}$, and \cref{eq:cone-bound} gives $3.0297$ against a worst
sampled condition number of $2.9443$ over $216$ orthant points.

For \cref{thm:irredundant} we assemble the stated instance for every nonempty
$J_0\subseteq[m]$ with $m\le4$, $26$ instances in all, and evaluate the
$2^m-1$ kernel equalities in each; in every case exactly the intended subset
is deficient.  Exact rational arithmetic confirms the same conclusion for
$m\le4$, independently of floating-point rank decisions.

\subsection{The Scott--Vogelius setting}\label{sec:fe}

Let $\Omega=(0,1)^2$.  The hierarchy starts from a $2\times2$ unit-square mesh
with a left diagonal in each square; each level uniformly refines the
triangular macro mesh and applies one barycentric split.  The velocity space
$V_h$ consists of continuous piecewise-quadratic vector fields with homogeneous
Dirichlet data, whose divergence lies in the discontinuous piecewise-linear
Scott--Vogelius pressure space.  This pair is inf-sup stable on barycentrically
refined meshes and gives pointwise divergence-free discrete
velocities~\cite{guzman2018infsup}.  The experiments solve the velocity-block
problem
\begin{equation}
 \begin{aligned}
 a_{h,\tau}(u_h,v_h)
 &=2\bigl(\varepsilon(u_h),\varepsilon(v_h)\bigr)_\Omega\\
 &\quad+\tau\bigl(\nabla\,\cdot u_h,
                     \nabla\,\cdot v_h\bigr)_\Omega,
 \qquad u_h,v_h\in V_h,
 \end{aligned}
  \label{eq:feoperator}
\end{equation}
where $(f,g)_\Omega=\int_\Omega f:g\,dx$ and
$\varepsilon(u)=(\nabla u+\nabla u^\top)/2$.

\paragraph{Relaxation and transfer}
The finite-element operators are assembled with
Firedrake~\cite{rathgeber2017firedrake}, and the additive patches use the
construction of Farrell et al.~\cite{farrell2021pcpatch}.  The preconditioner
applies a W-cycle with an exact coarse solve.  A macro-star is the union of all
macro cells incident to one macro-mesh vertex, and its patch contains the
velocity degrees of freedom supported there~\cite{farrell2021reynolds,
farrell2022elasticity}.  The vertex-star ablation instead uses the cells
incident to one vertex of the barycentrically refined mesh.

On each visit to a noncoarse level, a two-step Chebyshev smoother is
preconditioned by additive macro-star solves with exact patch factorizations.
Its target interval is set by a Krylov estimate of the largest
eigenvalue~\cite{balay2026petsc}.  The intergrid operators use the paired
corrections of Farrell et al.~\cite{farrell2021reynolds}: prolongation removes
a local energy correction from native prolongation and restriction is its
transpose, so the same symmetric patch operator appears in both directions, as
the outer PCG iteration requires.  That correction preserves the discrete
divergence in the large-penalty limit~\cite{farrell2021reynolds} and so
resolves the kernel identified by the one-penalty theory.  The W-cycle is not
the exact additive preconditioner of \cref{eq:schwarz}, so \cref{thm:uniform}
does not apply to it.

\paragraph{Krylov iteration and reported quantities}
PCG starts from zero and stops when the unpreconditioned relative residual
falls below $10^{-7}$, with a limit of 300 iterations.  The primary study uses
a fixed random load with seed 20270829 and penalties $0,1,10^2,10^4,10^6$, and
$10^8$; each value is the median of three solves after one warm-up, with the
true residual recomputed after each.  Writing $T_k$ for the tridiagonal matrix
of the associated Lanczos recurrence, we report
$\widehat{\kappa}=\lambda_{\max}(T_k)/\lambda_{\min}(T_k)$~\cite{greenbaum1997iterative},
a Ritz estimate rather than an exact condition number.  All computations are
serial on an Intel Xeon Cascade Lake processor with Firedrake 2026.4.1 and
PETSc 3.25.0~\cite{rathgeber2017firedrake,balay1997petsc,balay2026petsc}.

\subsection{Testing the criterion under exact additive correction}
\label{sec:fe-exact}

Here the preconditioner has the exact additive form \cref{eq:schwarz}, with
local inverses computed up to roundoff on a fixed finite-element space.  We
evaluate the macro-star kernel conditions, compare them with a deficient
vertex-star construction, and then consider the lattice generated by three
regional penalties.

\subsubsection{Two regional penalties}\label{sec:regional-penalties}

Split the domain into the mesh-aligned regions
$\Omega_L=(0,1/2)\times(0,1)$ and
$\Omega_R=(1/2,1)\times(0,1)$, and weight their divergence terms
independently.  The resulting bilinear form is
\begin{equation}
 \begin{aligned}
 a_{h,\tau_L,\tau_R}(u_h,v_h)
 &=2\bigl(\varepsilon(u_h),\varepsilon(v_h)\bigr)_\Omega
   +\tau_L\bigl(\nabla\,\cdot u_h,\nabla\,\cdot v_h\bigr)_{\Omega_L}\\
 &\quad+\tau_R\bigl(\nabla\,\cdot u_h,
                          \nabla\,\cdot v_h\bigr)_{\Omega_R},
 \end{aligned}
 \label{eq:fe-regional-operator}
\end{equation}
and its matrix has the form
\begin{equation}
 A_h(\tau_L,\tau_R)
 =A_{0,h}+\tau_LK_L^\top K_L+\tau_RK_R^\top K_R.
 \label{eq:fe-regional-matrix}
\end{equation}
Here, $K_L^\top K_L$ and $K_R^\top K_R$ are the matrices of the regional
divergence forms.  The interface follows macro-cell boundaries on every level;
all other solver settings agree with the one-penalty study.

\paragraph{Exact-additive test}
At refinement one the full velocity space has 418 degrees of freedom, of which
354 remain as free algebraic coordinates after imposing the boundary
conditions.  For the left, right, and joint penalties we
compare each global kernel with the span of its intersections with the 25
macro-star correction spaces; the ranks in \cref{tab:fe-regional-audit} use
relative and absolute tolerances $10^{-10}$ and $10^{-12}$.

\begin{table}[tbhp]
\caption{Exact-additive test at refinement one.  Left: dimensions of the
global joint kernels and their assembled local spans.  Right: condition
numbers at six parameter pairs.}
\label{tab:fe-regional-audit}
\centering
\small
\begin{tabular}{lrrr}
\toprule
subset & global dimension & local rank & deficiency\\
\midrule
\MultipenaltySubsetRows
\bottomrule
\end{tabular}
\qquad
\begin{tabular}{rc}
\toprule
$(\tau_L,\tau_R)$ & $\kappa(P^{-1}A_h)$\\
\midrule
\MultipenaltyExactRows
\bottomrule
\end{tabular}
\end{table}

At these tolerances all three kernel dimensions agree with their local span
ranks, the largest local-kernel residual is $5.7\times10^{-16}$, and the six
exact-additive condition numbers range from $13.56$ to $87.45$.  These
floating-point calculations support the kernel conditions on this mesh
but neither prove exact equality nor extend to finer meshes.

\subsubsection{A discretization in which the test fails}
\label{sec:fe-negative}

The macro-star calculation in \cref{tab:fe-regional-audit} has zero deficiency
for every subset.  For comparison, we repeat the audit with the smaller vertex
stars of the barycentrically refined mesh, which fail in the stiff column of
\cref{tab:fe-ablation}.
At refinement one they leave $\ControlFreeDofs$ free coordinates covered by
$\ControlPatches$ patches, and all three subset tests now fail
(\cref{tab:fe-negative}), so \cref{thm:rate} predicts
$\tau\lambda_{\min}\to\gamma_J^{-1}$ along each deficient ray.

\begin{table}[tbhp]
\caption{Vertex-star patches at refinement one.  Left: the three subset tests,
all deficient.  Right: the predicted rate along the joint ray
$\tau_L=\tau_R=\tau$, whose $\gamma_J^{-1}=\ControlJointPredicted$, against the
computed $\tau\lambda_{\min}$ and condition number.}
\label{tab:fe-negative}
\centering
\small
\begin{tabular}{lrrrr}
\toprule
subset & global & local & def. & $\gamma_J$\\
\midrule
\ControlSubsetRows
\bottomrule
\end{tabular}
\qquad
\begin{tabular}{rrrr}
\toprule
$\tau$ & $\tau\lambda_{\min}$ & rel.\ err. & $\kappa$\\
\midrule
\ControlRateRows
\bottomrule
\end{tabular}
\end{table}

At $\tau=10^8$, the measured scaled eigenvalue has relative error
$4.05\times10^{-7}$ with respect to the asymptotic value.  The error decreases
by two orders of magnitude for every two orders of increase in $\tau$, while
the observed condition number grows linearly across six decades.  On the same
mesh, every macro-star deficiency vanishes at the stated tolerances and
$\kappa$ stays below $88$.  Thus the rank diagnosis and the spectral rate are
consistent for these two patch families.

\subsubsection{Three penalties}\label{sec:fe-three}

Two penalties cannot exhibit the non-distributive case distinguished in
\cref{sec:quantitative}, since by \cref{cor:two} their kernel lattice is always
distributive.  We therefore cut $\Omega$ into
three equal vertical strips, whose interfaces lie on cell boundaries when the
base resolution is even and divisible by three; $\ThreeBaseN\times\ThreeBaseN$
gives $\ThreeFreeDofs$ free coordinates and $\ThreePatches$ macro-star patches
at the coarsest level.

\begin{table}[tbhp]
\caption{Three regional penalties with macro-star patches.  Left: all seven
subset tests.  Right: exact additive condition numbers over the orthant sample
points.  The partition is symmetric under $x\mapsto1-x$, so $\kappa$ is
unchanged by reversing the weights and one member of each mirror pair is
omitted.}
\label{tab:fe-three}
\centering
\small
\begin{tabular}{lrrrr}
\toprule
$J$ & global & local & def. & $\gamma_J$\\
\midrule
\ThreeSubsetRows
\bottomrule
\end{tabular}
\qquad
\begin{tabular}{lr}
\toprule
$(\tau_1,\tau_2,\tau_3)$ & $\kappa(P^{-1}A_h)$\\
\midrule
\ThreeOrthantRows
\bottomrule
\end{tabular}
\end{table}

The macro-star patches are deficient by one dimension for the middle strip and
have zero deficiency for the other six subsets at the stated tolerances.  The
middle strip is the only region with two interior interfaces.  The rate
calculation provides an independent consistency check on the numerical rank
diagnosis: $\gamma_{\{2\}}=\ThreeMiddleGamma$ gives
$\tau\lambda_{\min}\to\ThreeMiddlePredicted$, and at $\tau=10^8$ the computed
value has relative error $\ThreeMiddleTailError$.  Thus the three-region
partition introduces a subset-specific obstruction absent from the two-region
test: only the middle singleton kernel is deficient; the outer singletons, all
pairs, and the full intersection pass.

The condition numbers also display the subset-specific behavior of
\cref{ex:counterexample}.  They stay below $500$ on every sampled ray except
those on which $\tau_2$ diverges alone or dominates, corresponding to the
deficient singleton subset.  Inspecting only the equal-weight path would miss
this deterioration: the equal-weight sample gives $\kappa=420.58$, whereas
the value at $(1,10^8,1)$ is $2.08\times10^8$.

The kernel lattice is informative independently of this patch deficiency,
because it depends only on the joint kernels.  The three singleton projectors
do not commute, the largest pairwise commutator norm being
$\ThreeCommutator$, so \cref{cor:commuting} does not apply; closing the lattice
generated by the seven joint kernels gives $\ThreeLatticeMembers$ members, on
which the distributive law fails.  An $M_3$-type obstruction
(\cref{rem:m3}) therefore occurs in a Scott--Vogelius discretization and not
only in the $2\times2$ example.  Thus, even after enriching the patches until
all subset tests pass, \cref{thm:distributive} would not certify a common
splitting, although the ordering-cone construction would remain available.
Distributivity is sufficient and not necessary, so its failure does not prove
that no common splitting exists; moreover, the verdict is a floating-point
decision at the stated tolerances rather than a proof.

\subsection{Testing the mechanism in an inexact W-cycle}
\label{sec:fe-cycle}

The W-cycle is neither exact nor purely additive, so the exact-additive results
\cref{thm:uniform,thm:characterization} do not apply.  We therefore examine
whether its observed parameter dependence is consistent with a
kernel-preserving design for relaxation and transfer.

\Cref{tab:fe-primary} reports all 30 mesh--parameter combinations of the
primary study; at refinement $r$ there are $96\,4^{r-1}$ cells and $r+1$
levels.  All 90 solves converge, with largest recomputed relative residual
$8.383\times10^{-8}$.

\begin{table}[tbhp]
\caption{Scott--Vogelius penalty sweep.  Each entry gives the PCG iteration
count followed by the Ritz condition estimate $\widehat{\kappa}$, separated
by a slash.  Both values are medians of three repeated solves.}
\label{tab:fe-primary}
\centering
\scriptsize
\setlength{\tabcolsep}{3.3pt}
\begin{tabular}{rrcccccc}
\toprule
& & \multicolumn{6}{c}{$\tau$}\\
\cmidrule(lr){3-8}
refinement & velocity DOFs & $0$ & $1$ & $10^2$ & $10^4$ & $10^6$ & $10^8$\\
\midrule
1 & 418    & $7\,/\,1.384$ & $7\,/\,1.341$ & $10\,/\,2.368$ & $10\,/\,2.562$ & $10\,/\,2.565$ & $10\,/\,2.565$\\
2 & 1,602  & $7\,/\,1.378$ & $7\,/\,1.340$ & $12\,/\,2.575$ & $12\,/\,2.721$ & $12\,/\,2.722$ & $12\,/\,2.722$\\
3 & 6,274  & $7\,/\,1.388$ & $7\,/\,1.347$ & $12\,/\,2.671$ & $13\,/\,2.813$ & $13\,/\,2.813$ & $13\,/\,2.813$\\
4 & 24,834 & $7\,/\,1.391$ & $7\,/\,1.350$ & $12\,/\,2.651$ & $13\,/\,2.792$ & $13\,/\,2.793$ & $13\,/\,2.793$\\
5 & 98,818 & $7\,/\,1.390$ & $7\,/\,1.350$ & $12\,/\,2.662$ & $13\,/\,2.806$ & $13\,/\,2.807$ & $13\,/\,2.807$\\
\bottomrule
\end{tabular}
\end{table}

Raising the penalty from $1$ to $10^2$ adds three to five iterations, whereas
six further orders of magnitude add at most one more: at $\tau=10^8$ the count
is 10--13 with $\widehat{\kappa}$ between 2.565 and 2.813.  The last three
columns are nearly equal on every mesh, indicating saturation over the sampled
range.

\Cref{tab:fe-ablation} compares the kernel-preserving configuration with five
alternatives at refinement three, including
BoomerAMG~\cite{henson2002boomeramg}.

\begin{table}[tbhp]
\caption{Finite-element ablations at refinement three; entries are PCG
iteration counts.  An asterisk marks failure to meet the tolerance within 300
iterations, and a dagger marks a PETSc breakdown after the displayed
iteration.}
\label{tab:fe-ablation}
\centering
\small
\begin{tabular}{llrrr}
\toprule
relaxation or preconditioner & transfer & $\tau=0$ & $\tau=1$ & $\tau=10^8$\\
\midrule
macro-star patches & corrected & 7 & 7 & 13\\
macro-star patches & native & 7 & 7 & $300^\ast$\\
vertex-star patches & corrected & 10 & 11 & $300^\ast$\\
vertex-star patches & native & 10 & 12 & $38^\dagger$\\
Jacobi multigrid & native & 17 & 20 & $39^\dagger$\\
BoomerAMG & algebraic & 9 & 11 & $300^\ast$\\
\bottomrule
\end{tabular}
\end{table}

At $\tau=0$ and $\tau=1$, every method converges, and the table does not
distinguish the geometric choices.  At $\tau=10^8$, only the macro-star method
with corrected transfer reaches the tolerance.  Replacing either the
macro-star patches or the corrected transfer leads to failure or breakdown.
BoomerAMG also reaches the iteration limit; because its symmetry was not
established, this PCG result is reported only as a solver control and is not
interpreted as spectral evidence.

\paragraph{Two regional weights}
The component-matched transfer uses the piecewise coefficient
$\tau_L\mathbf 1_{\Omega_L}+\tau_R\mathbf 1_{\Omega_R}$ in both the transfer
energy and its penalty-only right-hand side.  We test six parameter pairs on
each of five refinements, reporting medians of three solves after one warm-up,
and complete the grid $\{1,10^2,10^4,10^6,10^8\}^2$ with nineteen further pairs
at refinement three.  Single-solve controls there compare component-matched
transfer with native transfer and with corrected transfer at the uniform
coefficient $\max(\tau_L,\tau_R)$.

\begin{table}[tbhp]
\caption{Median PCG iterations for the component-matched W-cycle at the six
primary parameter pairs; each entry summarizes three repeated solves after
one warm-up solve.}
\label{tab:fe-regional-primary}
\centering
\scriptsize
\setlength{\tabcolsep}{3.3pt}
\begin{tabular}{rrcccccc}
\toprule
& & \multicolumn{6}{c}{$(\tau_L,\tau_R)$}\\
\cmidrule(lr){3-8}
refinement & velocity DOFs
 & $(1,1)$ & $(10^8,1)$ & $(1,10^8)$ & $(10^8,10^8)$
 & $(10^8,10^2)$ & $(10^2,10^8)$\\
\midrule
\MultipenaltyPrimaryRows
\bottomrule
\end{tabular}
\end{table}

\begin{table}[tbhp]
\caption{Extreme single-solve transfer controls at refinement three.  An
asterisk marks failure at the 300-iteration limit.}
\label{tab:fe-regional-grid}
\centering
\small
\begin{tabular}{lccc}
\toprule
transfer & $(10^8,1)$ & $(1,10^8)$ & $(10^8,10^8)$\\
\midrule
\MultipenaltyControlRows
\bottomrule
\end{tabular}
\end{table}

All $(6\cdot5+19)\cdot3=147$ component-matched solves converge in 7--13
iterations, including the six primary pairs at refinement five with 98,818
velocity DOFs.  Every
entry in the complete 25-point refinement-three grid lies between 7 and 13,
with the value 7 occurring only at $(\tau_L,\tau_R)=(1,1)$.  At
refinement three, native transfer fails to reach the tolerance within 300
iterations in every extreme control (\cref{tab:fe-regional-grid}), whereas
uniform corrected transfer converges in 13--14.  The controls thus separate
corrected from native
transfer, but do not establish that component matching is necessary, the
uniform correction having been tested at only the three displayed pairs.

The regional divergence norms reflect the imposed weights.  At refinement
three, the left/right pair changes from
$(\MultipenaltyBaselineLeftDivergence,\MultipenaltyBaselineRightDivergence)$
at $(\tau_L,\tau_R)=(1,1)$ to
$(\MultipenaltyLeftStiffLeftDivergence,
\MultipenaltyLeftStiffRightDivergence)$ at $(10^8,1)$ and
$(\MultipenaltyRightStiffLeftDivergence,
\MultipenaltyRightStiffRightDivergence)$ at $(1,10^8)$.

\section{Conclusion}\label{sec:conclusion}

For a fixed finite-dimensional exact-additive preconditioner, robustness over
the nonnegative parameter orthant is equivalent to decomposability of the
joint kernel associated with every nonempty subset of penalties.  A deficient
subset produces linear-order condition-number growth along its active ray, and
the generalized eigenvalue $\gamma_J$ gives the exact leading coefficient for
the stable-decomposition constant and the reciprocal smallest eigenvalue.  The
subset conditions are irredundant in the worst case.  When they hold, filtered
right inverses give computable bounds on parameter-ordering cones; a
distributive kernel lattice, which occurs automatically for two penalties,
permits one common splitting over the full orthant.

The numerical study separates finite-dimensional evaluation of these results
from evidence for an inexact multilevel solver.  In the staggered-grid and
exact-additive finite-element calculations, computed ranks and spectra are
consistent with the subset characterization and the $\gamma_J$ asymptotics.
For the three-region Scott--Vogelius problem, the middle singleton subset is
deficient at the stated tolerances, and the unequal-weight samples deteriorate
although the equal-weight sample remains moderate.  The separate W-cycle
experiments converge in 7--13 PCG iterations over the sampled mesh and parameter
ranges when macro-star relaxation and corrected transfer are used.

\paragraph{Limitations and future work}
The filtered-construction constants are finite on each mesh, but their uniformity under refinement is left to future work.  The W-cycle is inexact, and extending the analysis to inexact local solves and multilevel composition
remains an open problem.  Further directions include enriching the relaxation for the three-region case, treating general regional partitions, and analyzing
the corresponding complete saddle-point systems.

\section*{Acknowledgments}
The author thanks the School of Computing at The Australian National University for providing computational resources. 
AI tools are used for language and grammar checks only; they do not contribute to the mathematical content of this work.

\end{document}